\documentclass{amsart}
\usepackage{amsmath, amsthm, amscd, amsfonts, amssymb, graphicx, color}
\usepackage[bookmarksnumbered, colorlinks, plainpages]{hyperref}

\newenvironment{rcases}
  {\left.\begin{aligned}}
  {\end{aligned}\right\rbrace}

\input{mathrsfs.sty}

\newtheorem{theorem}{Theorem}[section]
\newtheorem{lemma}[theorem]{Lemma}
\newtheorem{proposition}[theorem]{Proposition}
\newtheorem{corollary}[theorem]{Corollary}
\theoremstyle{definition}
\newtheorem{definition}[theorem]{Definition}
\newtheorem{example}[theorem]{Example}
\theoremstyle{remark}
\newtheorem{remark}[theorem]{Remark}
\begin{document}
\title{ Cyclic $p$-$\varphi$-contraction mappings}

\author[S. M. S. Nabavi Sales]{Sayyed Mohammad Sadegh  Nabavi Sales}
\address{ Department of Mathematics and Computer Science, Hakim Sabzevari University, P.O. Box 397, Sabzevar, Iran.}
\email[S. M. S. Nabavi Sales]{sadegh.nabavi@gmail.com; sadegh.nabavi@hsu.ac.ir}
%\thanks{$^*$Corresponding author}
\subjclass[2020]{Primary 47H10, 41A65; Secondary 41A52}

\date{}

%\dedicatory{This paper is dedicated to our advisors.}

\keywords{Fixed point, Cyclic $p$-$\varphi$-contraction mapping, metric space, Banach space, Reflexive Banach space}

\begin{abstract}
We introduce a new class of mappings called cyclic $p$-$\varphi$-contraction mappings and investigate the existence and uniqueness of fixed point for such mappings defined on metric spaces, uniformly convex Banach spaces, or reflexive Banach spaces.
\end{abstract}

 \maketitle
\section{Introduction  and Preliminaries}
The well known Banach fixed point theorem, also known as the contraction mapping theorem, is an important result in the theory of complete metric spaces. It discusses the existence of the unique fixed point of a contraction mapping defined on a complete metric space. The contractive condition for a mapping, considered in its classical version, seems to be somewhat strong, since it indeed makes such a mapping continuous. The investigations in the field, since the time of the proof of the original version of the Banach fixed point theorem, show that by relaxing the contractive condition we can provide examples of mappings that are not necessarily continuous and for which the existence and uniqueness of fixed point is guaranteed; see \cite{Ki}. Some recent work in this area that motivated this article include: \cite{ThSh,ElVe,KiSrVe}. In \cite{KiSrVe}, the authors consider mapping $T:X\to X$ that is cyclic contraction meaning that, there are nonempty subsets $A_1,\cdots,A_m$ of a metric space $(X,d)$, $T(A_i)\subset A_{i+1}$ for $i=1,2,\cdots,m$ with $A_{m+1}=A_1$, and there exists $0<\alpha<1$ so that $d(Tx,Ty)\leq\alpha d(x,y)$ for $x\in A_i$ and $y\in A_{i+1}$. It is proved there that if $X$ is complete and $A_i$s are closed, then $T$ has a unique fixed point in $\cap_{i=1}^mA_i$. Note that the contractive condition considered in this result does not make $T$ continuous, so it is actually an important improvement. This result is improved in \cite{ThSh,ElVe} considering cyclic $\alpha$-contraction mappings and cyclic $\varphi$-contraction mappings, where $0<\alpha<1$ and $\varphi:[0,\infty)\to[0,\infty)$ is a strictly increasing function. For nonempty subsets $A_1$ and $A_2$ of a metric space $(X,d)$, a mapping $T:A_1\cup A_2\to A_1\cup A_2$ is said to be cyclic $\varphi$-contraction mapping if $T(A_1)\subset A_2$, $T(A_2)\subset A_1$ and $$d(Tx,Ty)\leq d(x,y)-\varphi(d(x,y))+\varphi(d(A_1,A_2)),\qquad(x\in A_1,y\in A_2),$$ where $d(A_1,A_2)$ stands for the distance between $A_1$ and $A_2$. If $\varphi(t)=\alpha t$, then a cyclic $\varphi$-contraction mapping is called a cyclic $\alpha$-contraction mapping. In\cite{ThSh,ElVe}, the authors prove some fixed point theorems in general metric spaces, uniformly convex Banach spaces, and reflexive spaces. In this note, we generalize their method in such spaces for mappings which we call cyclic $p$-$\varphi$-contraction, where $p\geq1$ and $\varphi:[0,\infty)\to[0,\infty)$ is a strictly increasing function. Now, some preliminaries.

Let $X$ be a Banach space. $X$ is said to be uniformly convex if there exists a strictly increasing function $\delta:(0,2]\to[0,1]$ such that the following implication holds for all $x,y,w\in X, R>0$ and $r\in[0,2R]$:
\begin{eqnarray*}
\begin{rcases}
  \|x-w\|\leq R \\
 \|y-w\|\leq R \\
  \|x-y\|\geq r
\end{rcases}\Rightarrow\left\|{{x+y}\over{2}}-w\right\|\leq\left(1-\delta({r\over R})\right)R.
\end{eqnarray*}
The space $X$ is called strictly convex if the following implication holds for all $x,y,w\in X$ and $R>0$:
\begin{eqnarray*}
\begin{rcases}
  \|x-w\|\leq R \\
 \|y-w\|\leq R \\
  x\neq y
\end{rcases}\Rightarrow\left\|{{x+y}\over{2}}-w\right\|< R.
\end{eqnarray*}
If $A\subset X$, by $A-A$ we mean the set $\{x-y;x,y\in A\}$. A metric space $X$ is said to be boundedly compact if every bounded sequence in $X$ has a convergent subsequence.
\subsection{Notion of cyclic $p$-$\varphi$-contraction mapping}
\begin{definition}
Let $A_1$, $A_2,\cdots, A_m$ be nonempty subsets of a metric space $(X,d)$. For $1\leq p\leq\infty$ we define $d_p(A_1,A_2,\cdots,A_m)$ to be $$d_p(A_1,A_2,\cdots,A_m):=\left(\sum_{i=1}^md(A_i,A_{i+1})^p\right)^{1\over p},$$ if $1\leq p<\infty$ and $$d_\infty(A_1,A_2,\cdots,A_m):=\max\{d(A_1,A_2),d(A_2,A_3),\cdots,d(A_m,A_1)\}.$$ Note that in this notation, we in fact regard $A_1$, $A_2,\cdots, A_m$ as a chain of subsets of $X$. For $x_1,\cdots,x_m,y_1,\cdots,y_m\in X$, $d_p(x_1,\cdots,x_m;y_1,\cdots,y_m)$ is defined as follows:
for $1\leq p<\infty$
\begin{eqnarray*}
d_p(x_1,\cdots,x_m;y_1,\cdots,y_m):=\left(\sum_{i=1}^md(x_i,y_{i+1})^p\right)^{1\over p}
\end{eqnarray*}
where $y_{m+1}=y_1$, and
\begin{eqnarray*}
d_\infty(x_1,\cdots,x_m;y_1,\cdots,y_m):=\max\{d(x_1,y_2),d(x_2,y_3),\cdots,d(x_m,y_1)\}.
\end{eqnarray*}
We agree to use the notation $$d_p(x_1,\cdots,x_m)=d_p(x_1,\cdots,x_m;x_1,\cdots,x_m).$$
A mapping $T:X\to X$ is said to be cyclic if $T(A_i)\subset A_{i+1}$ for $i=1,2,\cdots,m$ with $A_{m+1}=A_1$. For a strictly increasing function  $\varphi:[0,\infty)\to[0,\infty)$, and  $1\leq p\leq\infty$, $T$ is said to be cyclic $p$-$\varphi$-contraction if it is cyclic and satisfies
\begin{eqnarray}\label{CC}
d_p(Tx_1,\cdots,Tx_m;Ty_1,\cdots,Ty_m)&\leq& d_p(x_1,\cdots,x_m;y_1,\cdots,y_m)\nonumber\\&&-\varphi(d_p(x_1,\cdots,x_m;y_1,\cdots,y_m))\nonumber\\&&+\varphi(d_p(A_1,\cdots,A_m))
\end{eqnarray}
 for all $x_i,y_i\in A_i$ where $i=1,2,\cdots,m$. For $\alpha\in(0,1)$, if $\varphi$ is defined to be $\varphi(t)=\alpha t$, then a cyclic $p$-$\varphi$-contraction is called cyclic $p$-$\alpha$-contraction.
\end{definition}
In what follows, we assume that $\varphi:[0,\infty)\to[0,\infty)$ is a strictly increasing function, $1\leq p\leq\infty$, $A_1$, $A_2,\cdots, A_m$ are nonempty subsets of a metric space $(X,d)$, and $T:\cup_{i=1}^mA_i\to\cup_{i=1}^mA_i$ is a cyclic $p$-$\varphi$-contraction, which means that for $i=1,\cdots,m$ we have that $T(A_i)\subset A_{i+1}$ with $A_{m+1}=A_1$ and $T$ satisfies \eqref{CC}.
\begin{remark}
Note that, for a cyclic $p$-$\varphi$-contraction $T:\cup_{i=1}^mA_i\to\cup_{i=1}^mA_i$ if $x_i\in A_i$ for $i=1,\cdots,m$, then obviously from \eqref{CC} we have that \begin{eqnarray}\label{FP8}d_p(Tx_1,\cdots,Tx_m;Ty_1,\cdots,Ty_m)\leq d_p(x_1,\cdots,x_m;y_1,\cdots,y_m).\end{eqnarray} Thus, if $x_0\in A_1$ and $x_n:=Tx_{n-1}$, then for all natural numbers $n$ and $k$ we have that
\begin{eqnarray*}d_p(x_n,x_{n+1},\cdots,x_{n+m-1};x_k,x_{k+1},\cdots,x_{k+m-1})\leq\\ d_p(x_{n-1},x_n,\cdots,x_{n+m-2};x_{k-1},x_k,\cdots,x_{k+m-2}),\end{eqnarray*} which obviously implies that
$$d_p(x_n,x_{n+1},\cdots,x_{n+m-1})\leq d_p(x_{n-1},x_n,\cdots,x_{n+m-2}).$$
\end{remark}
Throughout the paper we assume that $x_0\in A_1$ and $x_n=Tx_{n-1}$ where $T$ is a cyclic $p$-$\varphi$-contraction.
\section{General metric spaces}
In this section we state some fixed point theorems for mappings defined on metric spaces. The results of this section are mainly the generalization of those of \cite{ElVe} expressed for metric spaces. Here, we assume that $(X,d)$ is an abstract metric space and $A_1,\cdots,A_m\subset X$ and $T:\cup_{i=1}^mA_i\to\cup_{i=1}^mA_i$ is a cyclic $p$-$\varphi$-contraction mapping and $x_0\in A_1$ and $x_n:=Tx_{n-1}$ for all natural numbers $n$,
unless we intend another assumption, in which case we state it explicitly.
\begin{theorem}\label{XX}
 $$d_p(x_n, x_{n+1},\cdots,x_{n+m-1})\to d_p(A_1,A_2,\cdots,A_m)$$ as $n\to\infty$.
\end{theorem}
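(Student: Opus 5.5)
Write $a_n:=d_p(x_n,x_{n+1},\cdots,x_{n+m-1})$ and $D:=d_p(A_1,\cdots,A_m)$. The plan is the standard monotone-sequence argument of \cite{ElVe}, adapted to the chain of $m$ sets.

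First, check that $a_n\geq D$ for every $n$. Since $x_0\in A_1$ and $T$ is cyclic, $x_j\in A_{(j \bmod m)+1}$. Hence the $m$ consecutive points $x_n,\dots,x_{n+m-1}$ lie in the $m$ sets of the chain, in cyclic order starting from some $A_r$. Each term $d(x_{n+i},x_{n+i+1})$, together with the wrap-around term $d(x_{n+m-1},x_n)$, is therefore at least $d(A_j,A_{j+1})$ for the appropriate index $j$. The map $j\mapsto j$ is just a cyclic shift, so the $\ell^p$ (or max) sum over all $m$ terms dominates the $\ell^p$ (or max) sum of the $d(A_j,A_{j+1})$, which is $D$. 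Some care is needed because the wrap-around pair $(x_{n+m-1},x_n)$ lies in consecutive sets $A_{r-1},A_r$. Indeed $x_{n+m-1}\in A_{r+m-1}=A_{r-1}$ and $x_n\in A_r$, so this works, and we can use the symmetry of $d(\cdot,\cdot)$ between sets.

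Second, by the Remark following the definition, $(a_n)$ is nonincreasing. Being bounded below by $D$, it converges to some limit $L\geq D$.

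Third, argue by contradiction that $L=D$. Apply \eqref{CC} with $(x_1,\dots,x_m)$ and $(y_1,\dots,y_m)$ both replaced by the block $x_{n},\dots,x_{n+m-1}$, suitably reindexed so that the $i$-th entry lies in $A_i$. One has to check that $d_p(\cdot;\cdot)$ of this reindexed block still equals $a_n$ after the cyclic shift; it does, since the sum runs over all consecutive pairs cyclically. This gives
\[
a_{n+1}\leq a_n-\varphi(a_n)+\varphi(D).
\]
Since $\varphi$ is increasing and $a_n\geq L$, we get $a_{n+1}\leq a_n-(\varphi(L)-\varphi(D))$. If $L>D$, strict monotonicity gives $\varphi(L)-\varphi(D)=:\varepsilon>0$, so $a_{n}\leq a_0-n\varepsilon\to-\infty$, contradicting $a_n\geq 0$. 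Hence $L=D$. Note that the case $p=\infty$ runs identically with max in place of the $\ell^p$ sum.

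The main obstacle is the bookkeeping in the first and third steps. When $n\not\equiv 0 \pmod m$, the block $x_n,\dots,x_{n+m-1}$ starts in $A_{r}$ rather than $A_1$, so one must relabel cyclically to match the hypothesis $x_i,y_i\in A_i$ of \eqref{CC}. One then verifies that both $d_p$ of the block and $d_p(A_1,\dots,A_m)$ are invariant under cyclic relabeling, the latter because $D$ is a symmetric sum over all cyclically consecutive pairs of sets.
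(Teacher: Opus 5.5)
Your proposal is correct and is essentially the argument the paper defers to (the proof of \cite[Theorem 3]{ThSh}): $(a_n)$ is nonincreasing and bounded below by $d_p(A_1,\cdots,A_m)$, and $a_{n+1}\leq a_n-\varphi(a_n)+\varphi(d_p(A_1,\cdots,A_m))$ together with strict monotonicity of $\varphi$ forces the limit to be $d_p(A_1,\cdots,A_m)$. Your explicit cyclic relabeling, needed to apply \eqref{CC} to blocks $x_n,\cdots,x_{n+m-1}$ with $n\not\equiv 0 \pmod m$, supplies a detail the paper omits.
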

\begin{proof}We can follow the method of the proof of \cite[Theorem 3]{ThSh} to come to a proof for this theorem. We omit details.
\end{proof}
\begin{theorem}\label{SS1}
 If $\{x_{mn}\}_{n\geq0}$ has a convergent subsequence in $A_1$, then there exists $x\in A_1$ so that $$d_p(x,Tx,\cdots,T^{m-1}x)=d_p(A_1,A_2,\cdots,A_m).$$
\end{theorem}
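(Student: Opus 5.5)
Let $x_{mn_k}\to x\in A_1$ be the given convergent subsequence. The plan is to compare the chain $(x,Tx,\dots,T^{m-1}x)$ with the orbit chain $(x_{mn_k},x_{mn_k+1},\dots,x_{mn_k+m-1})$, which lies in $A_1\times\cdots\times A_m$ because $T$ is cyclic. Throughout, $d_p(A_1,\dots,A_m)$ is abbreviated $D$. By Theorem \ref{XX},
\[
d_p(x_{mn_k},x_{mn_k+1},\dots,x_{mn_k+m-1})\to D .
\]
The claim $d_p(x,Tx,\dots,T^{m-1}x)\ge D$ is automatic, since $x\in A_1$, $Tx\in A_2$, and so on, and $d_p$ of a chain of points is at least $d_p$ of the sets. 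So only the reverse inequality needs proof.

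The first step controls the mixed quantity
\[
a_k:=d_p(x_{mn_k},\dots,x_{mn_k+m-1};\,x,Tx,\dots,T^{m-1}x),
\]
whose terms are $d(x_{mn_k+i-1},T^{i}x)$ for $i=1,\dots,m$, with $T^m x$ read as $x$ in the last slot because of the cyclic indexing $y_{m+1}=y_1$. I would apply \eqref{FP8} with the shifted chains $(x_{mn_k-1},\dots,x_{mn_k+m-2})$ and $(T^{m-1}x,x,Tx,\dots,T^{m-2}x)$. Both are admissible, since $x_{mn_k-1}\in A_m$ corresponds to $T^{m-1}x\in A_m$ after the cyclic relabeling, and the order of summands is immaterial because $d_p$ is symmetric under the cyclic shift of indices. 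This bounds $a_k$ by a quantity whose entries are $d(x_{mn_k+i-2},T^{i-1}x)$.

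For the first entry, $d(x_{mn_k-1},T^{m-1}x)$ is not obviously small, so instead I would use the triangle inequality in the $\ell^p$ (or $\ell^\infty$) norm on $\mathbb R^m$:
\[
d_p(x,Tx,\dots,T^{m-1}x)\le d_p(x_{mn_k},\dots,x_{mn_k+m-1})+\Big(\sum_i d(x_{mn_k+i-1},T^{i-1}x)^p\Big)^{1/p}+\Big(\sum_i d(x_{mn_k+i},T^{i}x)^p\Big)^{1/p}.
\]
This is the natural splitting of each $d(T^{i-1}x,T^{i}x)$ through the two orbit points. The main obstacle is showing $d(x_{mn_k+j},T^jx)\to 0$ for each $j$. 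For $j=0$ this is the hypothesis. For $j\ge1$, continuity of $T$ is not available, so I would instead use \eqref{CC} with a chain in which all but one coordinate coincide. The relevant terms of type $d(x_{mn_k+j-1},T^{j-1}x)$ are then controlled inductively, using that $d(x_{mn_k},x)\to0$ and that the remaining entries are bounded by distances along the orbit.

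If that direct induction fails, the fallback is the $\limsup$ argument of \cite{ThSh}. Pass to a subsequence so that the mixed quantity converges to some $L$. Then take limits in \eqref{CC}, applied to the pair of chains (orbit at time $mn_k-1$ versus $x$-chain), and use that $\varphi$ is strictly increasing to force $L\le D$ via the inequality $L\le L-\varphi(L)+\varphi(D)$. Combined with the $\ell^p$ triangle inequality and $d(x_{mn_k},x)\to0$, this gives $d_p(x,Tx,\dots,T^{m-1}x)\le D$, which completes the proof.
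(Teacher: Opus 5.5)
There is a genuine gap. Your main route needs $d(x_{mn_k+j},T^jx)\to0$ for $j\ge1$, and this is false in general. Condition \eqref{CC} only compares a point of $A_i$ with a point of $A_{i+1}$. It therefore gives no control of $d(Tu,Tv)$ for $u,v$ in the same set, and $T$ may be discontinuous there.

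For instance, take $m=2$, $A_1=\{a_t\}_{t\in[0,1]}$, $A_2=\{b_t\}_{t\in[0,1]}$ with $d(a_s,a_t)=d(b_s,b_t)=|s-t|$ and $d(a_s,b_t)=1$; this is a metric. Both sides of \eqref{CC} equal $2^{1/p}$ for every cyclic map, so $Ta_t=b_{t/2}$ $(t>0)$, $Ta_0=b_1$, $Tb_t=a_t$ is a cyclic $p$-$\varphi$-contraction. With $x_0=a_1$ one gets $x_{2n}=a_{2^{-n}}\to a_0=x$, yet $d(x_{2n+1},Tx)=d(b_{2^{-n-1}},b_1)\to1$. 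So your three-term $\ell^p$ splitting through $x_{mn_k+i}$ cannot close.

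The fallback does not repair this, for three reasons.
\begin{itemize}
\item Applying $T$ to $(T^{m-1}x,x,\dots,T^{m-2}x)$ gives $(T^mx,Tx,\dots,T^{m-1}x)$, not $(x,Tx,\dots,T^{m-1}x)$. Hence \eqref{FP8} does not bound $a_k$; you cannot ``read $T^mx$ as $x$''.
\item The two sides of \eqref{CC} therefore involve different mixed quantities, so there is no common limit $L$ to plug into $L\le L-\varphi(L)+\varphi(D)$.
\item Even $a_k\to D$ would only control $d(x_{mn_k+i-1},T^ix)$. Converting this to $d(T^{i-1}x,T^ix)$ for $i\ge2$ again requires the false claim.
\end{itemize}

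The missing idea is to insert $x$ only in the slot occupied by $x_{mn_k}$. You then push forward with \eqref{FP8} chains built from \emph{predecessors} of $x_{mn_k}$, so the only error ever incurred is $d(x,x_{mn_k})$. This is what the paper does: it replaces coordinates by $x,Tx,\dots$ one at a time and applies \eqref{FP8} to chains containing $x_{mn-1}$ next to $x$.

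A clean version goes as follows. For $0\le j\le m-1$ let $E_j=(x_{mn_k-m+1+j},\dots,x_{mn_k-1},x,Tx,\dots,T^jx)$, a cyclic relabeling of a chain in $A_1\times\cdots\times A_m$.
\begin{itemize}
\item Changing one slot by $\delta$ changes $d_p$ by at most $2^{1/p}\delta$. So $d_p(E_0)\le 2^{1/p}d(x,x_{mn_k})+d_p(x_{mn_k-m+1},\dots,x_{mn_k})$.
\item $TE_j$ differs from $E_{j+1}$ only by $x_{mn_k}$ versus $x$. So \eqref{FP8} gives $d_p(E_{j+1})\le 2^{1/p}d(x,x_{mn_k})+d_p(TE_j)\le 2^{1/p}d(x,x_{mn_k})+d_p(E_j)$.
\item Hence $d_p(x,Tx,\dots,T^{m-1}x)=d_p(E_{m-1})\le m\,2^{1/p}d(x,x_{mn_k})+d_p(x_{mn_k-m+1},\dots,x_{mn_k})\to D$ by Theorem~\ref{XX}.
\end{itemize}
No closeness of $x_{mn_k+j}$ to $T^jx$ is ever needed.
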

\begin{proof}
We may assume that  $\{x_{mn}\}_{n\geq0}$ is convergent and $x_{mn}\to x$ as $n\to\infty$. First assume that $p=\infty$. Therefore,
\begin{eqnarray*}
d_\infty(A_1,\cdots,A_m)&\leq& d_\infty(x,x_{mn+1},x_{mn+2},\cdots,x_{mn+m-1})\\
&=&\max\{d(x,x_{mn+1}),d(x_{mn+1},x_{mn+2}),\cdots,d(x_{mn+m-1},x)\}\\
&\leq&\max\{d(x,x_{mn})+d(x_{mn},x_{mn+1}),d(x,x_{mn})+d(x_{mn+1},x_{mn+2})\\&&,\cdots,d(x,x_{mn})+d(x_{mn+m-1},x_{mn})\}\\&=&d(x,x_{mn})+d_\infty(x_{mn},x_{mn+1},\cdots,x_{mn+m-1})\to d_\infty(A_1,\cdots,A_m).
\end{eqnarray*}
It follows that
\begin{eqnarray*}
d_\infty(A_1,\cdots,A_m)&\leq& d_\infty(x_{mn},Tx,x_{mn+2},\cdots,x_{mn+m-1})\\
&\leq&d_\infty(x_{mn-1},x,x_{mn+1}\cdots,x_{mn+m-2})\to d_\infty(A_1,\cdots,A_m).
\end{eqnarray*}
Hence $d_\infty(x_{mn},Tx,x_{mn+2},\cdots,x_{mn+m-1})\to d_\infty(A_1,\cdots,A_m)$ which implies that
\begin{eqnarray*}
d_\infty(A_1,\cdots,A_m)&\leq& d_\infty(x,Tx,x_{mn+2},\cdots,x_{mn+m-1})\\
&\leq&d(x,x_{mn})+d_\infty(x_{mn},Tx,x_{mn+1}\cdots,x_{mn+m-1})\\&&\to d_\infty(A_1,\cdots,A_m)
\end{eqnarray*}
that means $d_\infty(x,Tx,x_{mn+2},\cdots,x_{mn+m-1})\to d_\infty(A_1,\cdots,A_m)$. So, by an easy induction we can show that
$$d_\infty(x_{mn},Tx,T^2x,\cdots,T^{m-2}x,T^{m-1}x)\to d_\infty(A_1,\cdots,A_m).$$
This yeilds that
\begin{eqnarray*}
d_\infty(A_1,\cdots,A_m)&\leq&d_\infty(x,Tx,T^2x,\cdots,T^{m-2}x,T^{m-1}x)\\
&\leq&d(x,x_{mn})+d_\infty(x_{mn},Tx,T^2x,\cdots,T^{m-2}x,T^{m-1}x)\\&&\to d_\infty(A_1,\cdots,A_m).
\end{eqnarray*}
Now, assume that $1\leq p<\infty$. We have that
\begin{eqnarray*}
d_p(A_1,\cdots,A_m)&\leq& d_p(x,x_{mn+1},x_{mn+2},\cdots,x_{mn+m-1})\\
&=&\{d(x,x_{mn+1})^p+d(x_{mn+1},x_{mn+2})^p+\cdots+d(x_{mn+m-1},x)^p\}^{1\over p}\\
&\leq&\{(d(x,x_{mn})+d(x_{mn},x_{mn+1}))^p+d(x_{mn+1},x_{mn+2})^p\\&&+\cdots+(d(x,x_{mn})+d(x_{mn+m-1},x_{mn}))^p\}^{1\over p}\\&\leq&2^{1\over p}d(x,x_{mn})+d_p(x_{mn},x_{mn+1},\cdots,x_{mn+m-1})\to d_p(A_1,\cdots,A_m),
\end{eqnarray*}
which implies that $d_p(x,x_{mn+1},x_{mn+2},\cdots,x_{mn+m-1})\to d_p(A_1,\cdots,A_m)$.
Thus
\begin{eqnarray*}
d_p(A_1,\cdots,A_m)&\leq& d_p(x,Tx,x_{mn+2},\cdots,x_{mn+m-1})\\
&\leq&2^{1\over p}d(x,x_{mn})+d_p(x_{mn},Tx,x_{mn+1}\cdots,x_{mn+m-2})\\&&\to d_p(A_1,\cdots,A_m),
\end{eqnarray*}
that means $ d_p(x,Tx,x_{mn+2},\cdots,x_{mn+m-1})\to d_p(A_1,\cdots,A_m).$ An easy induction shows that $$d_p(x_{mn},Tx,T^2x,\cdots,T^{m-2}x,T^{m-1}x)\to d_p(A_1,\cdots,A_m)$$
and this yeilds that
\begin{eqnarray*}
d_p(A_1,\cdots,A_m)&\leq&d_p(x,Tx,T^2x,\cdots,T^{m-2}x,T^{m-1}x)\\
&\leq&2^{1\over p}d(x,x_{mn})+d_p(x_{mn},Tx,T^2x,\cdots,T^{m-2}x,T^{m-1}x)\\&&\to d_p(A_1,\cdots,A_m).
\end{eqnarray*}
\end{proof}
\begin{remark}In the same way as the proof of the previous theorem, we could show that if $\{x_{mn+i}\}_{n\geq0}$ has a convergent subsequence in $A_{i+1}$, then there exists an $x\in A_{i+1}$ so that $$d_p(x,Tx,\cdots,T^{m-1}x)=d_p(A_1,A_2,\cdots,A_m).$$
\end{remark}
\begin{theorem}\label{MET1}
Let $0<\alpha^m<{1\over{2^{1\over p}}}$ and let $T$ be a cyclic $p$-$\alpha$-contraction. Then sequences $\{x_{mn}\}$, $\{x_{mn+1}\}$, $\cdots$, $\{x_{mn+m-1}\}$ are all bounded.
\end{theorem}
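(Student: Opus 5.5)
The plan is to control the whole orbit at once. I will compare the $m$-tuple $z_n:=(x_{mn},x_{mn+1},\dots,x_{mn+m-1})$ with the fixed reference tuple $w:=(x_0,x_1,\dots,x_{m-1})$ and show that the scalar quantity
$$a_n:=d_p(x_{mn},\dots,x_{mn+m-1};x_0,\dots,x_{m-1})$$
is bounded in $n$.

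Since $x_{mn+i}\in A_{i+1}$ and $x_{i+1}\in A_{i+2}$, every term $d(x_{mn+i},x_{i+1})$ in $a_n$ pairs points of consecutive sets. Hence \eqref{CC} applies to the pair of tuples $(z_n,w)$. Once $a_n$ is bounded, each term satisfies $d(x_{mn+i},x_{i+1})\leq a_n$. With the single fixed point $x_{i+1}$ as centre, this shows that every sequence $\{x_{mn+i}\}_{n}$, $i=0,\dots,m-1$, is bounded, which is the statement.

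\textbf{Step 1 (one-step estimate).} Put $D:=d_p(A_1,\dots,A_m)$. With $\varphi(t)=\alpha t$, \eqref{CC} reads
$$d_p(Tx;Ty)\leq \lambda\, d_p(x;y)+\alpha D,$$
where $\lambda$ is the linear factor produced by the definition. Iterating this $m$ times, I get for tuples $x,y$ with $x_i,y_i\in A_i$
$$d_p(T^mx;T^my)\leq \lambda^m d_p(x;y)+cD,$$
with an explicit constant $c$ (a finite geometric sum). Here $T^m$ acts coordinatewise and preserves the cyclic positions, so the iteration is legitimate.

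\textbf{Step 2 (recursion for $a_n$).} Since $T^mz_n=z_{n+1}$, I estimate $a_{n+1}=d_p(T^mz_n;w)$ by inserting $T^mw=(x_m,\dots,x_{2m-1})$ via the triangle inequality in each coordinate, followed by Minkowski's inequality in $\ell^p$ (or taking the max when $p=\infty$). This gives
$$a_{n+1}\leq d_p(T^mz_n;T^mw)+K\leq \lambda^m a_n+cD+K,$$
where $K:=\big(\sum_{i}d(x_{m+i+1},x_{i+1})^p\big)^{1/p}$ is a constant depending only on the first $2m$ points of the orbit.

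If instead one follows the paper's style from Theorem \ref{SS1}, splitting a single coordinate through an intermediate point and using $(a+b)^p$-type bounds, the constant in front of $a_n$ picks up a factor $2^{1\over p}$. The recursion then has the form $a_{n+1}\leq 2^{1\over p}\lambda^m a_n+\text{const}$. This is exactly where the hypothesis $\alpha^m 2^{1\over p}<1$ is meant to enter: it makes the coefficient strictly less than $1$.

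\textbf{Step 3 (conclusion).} A linear recursion $a_{n+1}\leq q\,a_n+M$ with $q<1$ gives $a_n\leq a_0+M/(1-q)$ for all $n$, so $a_n$ is bounded and the boundedness of all $m$ subsequences follows as described above.

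\textbf{Main obstacle.} I expect the delicate point to be Step 1 together with the bookkeeping in Step 2. One has to check that the constant multiplying $a_n$ after $m$ iterations really is governed by $\alpha^m$, and to place the $2^{1\over p}$ loss correctly in the normalization the paper uses, so that the stated hypothesis $\alpha^m<2^{-1/p}$ is precisely what makes the coefficient less than $1$. I would first do the case $m=1$ (or $m=2$, as in \cite{ElVe}) explicitly to fix the constants, and then run the induction on the number of applications of $T$.
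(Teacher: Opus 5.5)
Your argument is correct, and it takes a genuinely different route from the paper.

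\textbf{The paper's route.} It argues by contradiction. It picks $N$ with $d(x_{mN-m},x_{m+1})\leq M<d(x_{mN},x_{m+1})$ and bounds $d(x_{mN},x_{m+1})\leq\alpha^m d_p(x_{mN-m},x_1,\dots,x_{m-1})+(1-\alpha^m)D$. That is, it works with the one-tuple quantity for $(x_0,\dots,x_{m-1})$ with only its first coordinate replaced. It then compares this with $d_p(x_0,\dots,x_{m-1})$. The replaced coordinate occurs in two terms of the cyclic sum, so this comparison costs a factor $2^{1/p}$; that is exactly where $\alpha^m<2^{-1/p}$ is used. Only $\{x_{mn}\}$ is treated explicitly.

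\textbf{Your route.} You work with the two-tuple quantity $a_n=d_p(z_n;w)$, which is precisely what \eqref{CC} contracts, and you move all coordinates at once. Inserting $T^mw$ coordinatewise and applying Minkowski costs only an additive constant $K$, since each index is used once. This gives $a_{n+1}\leq\lambda^m a_n+(1-\lambda^m)D+K$. You get an explicit bound rather than a contradiction, and all $m$ subsequences at once.

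\textbf{Your closing worry.} It is unfounded: no $2^{1/p}$ enters your recursion, and you only need $\lambda<1$. That holds under either reading of the constants:
\begin{itemize}
\item $\lambda=1-\alpha$ with $\alpha\in(0,1)$, as in the definition;
\item $\lambda=\alpha$ with $\alpha^m<2^{-1/p}\leq 1$, as in the paper's proof.
\end{itemize}
So your proof actually works under a weaker hypothesis than the one stated. The aside about following the paper's style should be dropped. Under the definition's convention, $2^{1/p}\lambda^m<1$ does not follow from $\alpha^m<2^{-1/p}$, so that aside is not a valid alternative.

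\textbf{Two small fixes.}
\begin{itemize}
\item In Step 1, the intermediate iterates $T^jz_n$ with $0<j<m$ sit in cyclically shifted positions. Justify the iteration by noting that $d_p(\cdot\,;\cdot)$ is unchanged under a simultaneous cyclic shift of both tuples.
\item The wrap-around terms are $d(x_{mn+m-1},x_0)$ in $a_n$ and $d(x_m,x_0)$ in $K$. So the centre for $\{x_{mn+m-1}\}$ is $x_0$, not $x_m$.
\end{itemize}
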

\begin{proof}
We show that $\{x_{mn}\}$ is bounded. For the others the method is similar. To the contrary assume that $\{x_{mn}\}$ is not bounded which means that for any natural number $M$ there exists a natural number $N$ so that
$$d(T^{mN-m}x_0,T^{m+1}x_0)\leq M\:{\rm but}\:d(T^{mN}x_0,T^{m+1}x_0)>M.$$
Thus
\begin{eqnarray*}
M&<&d(T^{mN}x_0,T^{m+1}x_0)\\&\leq& d_p(T^{mN}x_0,T^{m+1}x_0,T^{m+2}x_0,\cdots,T^{2m-1}x_0)\\&\leq&\alpha^md_p(T^{mN-m}x_0,Tx_0,T^2x_0,\cdots,T^{m-1}x_0)+(1-\alpha^m)d_p(A_1,\cdots,A_m),
\end{eqnarray*}
which implies that
\begin{eqnarray*}
{M\over{\alpha^m}}-{{1-\alpha^m}\over{\alpha^m}}d_p(A_1,\cdots,A_m)&<&d_p(T^{mN-m}x_0,Tx_0,T^2x_0,\cdots,T^{m-1}x_0)\\
&\leq&d(T^{mN-m}x_0,T^{m+1}x_0)+d(T^{m+1}x_0,x_0)\\&&+d_p(x_0,Tx_0,\cdots,T^{m-1}x_0)\\&\leq&M+d(T^{m+1}x_0,x_0)+d_p(x_0,Tx_0,\cdots,T^{m-1}x_0).
\end{eqnarray*}
Thus
\begin{eqnarray*}
\left({{1}\over{\alpha^m}}-2^{1\over p}\right)M&\leq&d(T^{m+1}x_0,x_0)+d_p(x_0,Tx_0,\cdots,T^{m-1}x_0)\\&&+{{1-\alpha^m}\over{\alpha^m}}d_p(A_1,\cdots,A_m).
\end{eqnarray*}
According to our assumption about $\alpha$ this implies that
\begin{eqnarray*}
M\leq {{2^{1\over p}d(T^{m+1}x_0,x_0)+d_p(x_0,Tx_0,\cdots,T^{m-1}x_0)+{{1-\alpha^m}\over{\alpha^m}}d_p(A_1,\cdots,A_m)}\over{\left({{1}\over{\alpha^m}}-2^{1\over p}\right)}}
\end{eqnarray*}
which is impossible, because $M$ is an arbitrary natural number, but the terms in the right side of the inequality are all constants.
\end{proof}
Notice that, in the recent theorem if $p=\infty$ then $\alpha$ can be chosen to be in $(0,1)$.
\begin{theorem}Let $0<\alpha^m<{1\over{2^{1\over p}}}$ and let $T$ be a cyclic $p$-$\alpha$-contraction. If at least one of $A_i$s is boundedly compact, then there exists $x\in\cup_{i=1}^mA_i$ so that
$$d_p(x,Tx,\cdots,T^{m-1}x)=d_p(A_1,A_2,\cdots,A_m).$$
\end{theorem}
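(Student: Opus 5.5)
The plan is to combine Theorem~\ref{MET1} with Theorem~\ref{SS1} and the remark following it. Suppose $A_{i+1}$ is boundedly compact for some $0\le i\le m-1$. Since $T$ is cyclic and $x_0\in A_1$, we have $x_{mn+i}=T^{mn+i}x_0\in A_{i+1}$ for every $n\geq0$. So the subsequence $\{x_{mn+i}\}_{n\geq0}$ lies entirely in $A_{i+1}$.

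First I will invoke Theorem~\ref{MET1}. Its hypotheses, $0<\alpha^m<{1\over{2^{1\over p}}}$ and $T$ a cyclic $p$-$\alpha$-contraction, are exactly the standing assumptions here. It gives that each of $\{x_{mn}\},\dots,\{x_{mn+m-1}\}$ is bounded, in particular $\{x_{mn+i}\}$. Because $A_{i+1}$ is boundedly compact, this bounded sequence in $A_{i+1}$ has a subsequence converging to some $x\in A_{i+1}$.

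For $i=0$, Theorem~\ref{SS1} applied to this convergent subsequence gives $x\in A_1$ with $d_p(x,Tx,\cdots,T^{m-1}x)=d_p(A_1,\cdots,A_m)$. For $i\ge1$, the remark after Theorem~\ref{SS1} gives the same conclusion with $x\in A_{i+1}$. In either case $x\in\cup_{i=1}^mA_i$, which is what the statement asks for.

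The main point to check is that "boundedly compact" is read as every bounded sequence in the subspace $A_{i+1}$ having a subsequence converging \emph{in} $A_{i+1}$, as the definition of a metric space being boundedly compact requires. This matters because Theorem~\ref{SS1} needs the limit to lie in $A_{i+1}$. The rest is only the index bookkeeping $x_{mn+i}\in A_{i+1}$, so no step should be a genuine obstacle; any difficulty has already been absorbed into Theorem~\ref{MET1} and the remark.
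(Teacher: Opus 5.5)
Your proposal is correct and is the argument the paper intends: the paper gives no written proof of this theorem, placing it directly after Theorem~\ref{MET1} as an immediate consequence of that theorem (boundedness of each $\{x_{mn+i}\}$), the bounded compactness of some $A_{i+1}$, and Theorem~\ref{SS1} together with the remark following it. Your point that bounded compactness of $A_{i+1}$ must give a limit inside $A_{i+1}$ is the right reading, and it is exactly what Theorem~\ref{SS1} and that remark require.
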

\begin{corollary}\label{C1}
Let $A_1$, $A_2,\cdots, A_m$ be nonempty closed subsets of a normed space $X$, and let $0<\alpha^m<{1\over{2^{1\over p}}}$ and let $T$ be a cyclic $p$-$\alpha$-contraction. If at least the linear span of one of $A_i$s is a finite dimensional subspace of $X$, then there exists $x\in\cup_{i=1}^mA_i$ so that
$$d_p(x,Tx,\cdots,T^{m-1}x)=d_p(A_1,A_2,\cdots,A_m).$$
\end{corollary}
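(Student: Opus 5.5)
The plan is to reduce Corollary \ref{C1} to the preceding theorem, i.e. the one asserting the conclusion when at least one of the $A_i$s is boundedly compact. The hypotheses on $\alpha$ and on $T$ are the same there. So it suffices to show one thing: if $A_j$ is a nonempty closed subset of the normed space $X$ and $Y:=\operatorname{span}A_j$ is finite dimensional, then $A_j$ is boundedly compact.

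First I observe that $Y$ is a finite dimensional subspace of $X$. By the standard fact that all norms on a finite dimensional space are equivalent, $Y$ is linearly homeomorphic to some $\mathbb{R}^k$ or $\mathbb{C}^k$ with the Euclidean norm. Hence $Y$ is complete and therefore closed in $X$. Moreover, by the Heine--Borel (Bolzano--Weierstrass) theorem, every bounded sequence in $Y$ has a subsequence converging to a point of $Y$.

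Now let $\{a_n\}$ be a bounded sequence in $A_j$. Since $A_j\subset Y$, the previous step yields a subsequence $a_{n_k}\to a$ for some $a\in Y\subset X$. Since $A_j$ is closed in $X$, we get $a\in A_j$. Thus every bounded sequence in $A_j$ has a subsequence converging in $A_j$, so $A_j$ is boundedly compact in the sense defined in the preliminaries. Applying the preceding theorem with this $A_j$ gives $x\in\cup_{i=1}^mA_i$ with $d_p(x,Tx,\cdots,T^{m-1}x)=d_p(A_1,\cdots,A_m)$.

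The only point requiring care is the closedness step: the limit must lie in $A_j$ itself, not merely in $Y$, since bounded compactness is a property of $A_j$ as a metric space. This is exactly where the hypothesis that the $A_i$ are closed is used. The finite dimensionality is used only to get the Bolzano--Weierstrass property via equivalence of norms. I expect no genuine obstacle beyond invoking these standard facts.
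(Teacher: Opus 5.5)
Your proposal is correct and matches the paper's implicit argument. The paper gives no separate proof and treats the corollary as an immediate consequence of the preceding theorem on boundedly compact $A_i$. As you show, a closed subset of a normed space whose linear span is finite dimensional is boundedly compact, by equivalence of norms and Bolzano--Weierstrass, and closedness of $A_j$ keeps the limit inside $A_j$.
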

\begin{example}
Let $\alpha$ be a number satisfying the condition of Corollary \ref{C1}. For $i=1,\cdots,m$ and $1\leq q\leq\infty$, consider $A_i\subset \ell^q$ to be defined by $A_i=\{(1+\alpha^{mn+i-1})e_{mn+i-1}\}_{n\in\mathbb{N}}$. Suppose that
$$T((1+\alpha^{mn+i-1})e_{mn+i-1})=(1+\alpha^{mn+i})e_{mn+i},\qquad(n\in\mathbb{N}).$$
We are now going to check that $T$ is cyclic $p$-$\alpha$-contraction. Assume that $p=\infty$ and $1\leq q\leq\infty$. We have that

$$d_\infty(T((1+\alpha^{mn_1})e_{mn_1}),T((1+\alpha^{mn_2+1})e_{mn_2+1}),\cdots,T((1+\alpha^{mn_m+m-1})e_{mn_m+m-1}))$$
\begin{eqnarray*}
&=&\max\{\|T((1+\alpha^{mn_1})e_{mn_1})-T((1+\alpha^{mn_2+1})e_{mn_2+1})\|_q\\&&,\|T((1+\alpha^{mn_2+1})e_{mn_2+1})-T((1+\alpha^{mn_3+2})e_{mn_3+2})\|_q\\&&,\cdots,\|T((1+\alpha^{mn_{m}+m-1})e_{mn_{m}+m-1})-T((1+\alpha^{mn_1+})e_{mn_1})\|_q\}\\
&=&\max\{[(1+\alpha^{mn_1+1})^q+(1+\alpha^{mn_2+2})^q]^{1\over q},\cdots,[(1+\alpha^{mn_m+m})^q+(1+\alpha^{mn_1+1})^q]^{1\over q}\}\\
&=&\max\{[(\alpha+\alpha^{mn_1+1}+(1-\alpha))^q+(\alpha+\alpha^{mn_2+2}+(1-\alpha))^q]^{1\over q}\\&&,\cdots,[(\alpha+\alpha^{mn_m+m}+(1-\alpha))^q+(\alpha+\alpha^{mn_1+1}+(1-\alpha))^q]^{1\over q}\}\\
&\leq&\max\{\alpha[(1+\alpha^{mn_1+1})^q+(1+\alpha^{mn_2+2}+)^q]^{1\over q}+2^{1\over q}(1-\alpha)\\&&,\cdots,[(1+\alpha^{mn_m+m}+)^q+(1+\alpha^{mn_1+1})^q]^{1\over q}+2^{1\over q}(1-\alpha)\}\\&
=&\alpha\max\{[(1+\alpha^{mn_1+1})^q+(1+\alpha^{mn_2+2}+)^q]^{1\over q}\\&&,\cdots,[(1+\alpha^{mn_m+m}+)^q+(1+\alpha^{mn_1+1})^q]^{1\over q}\}+2^{1\over q}(1-\alpha).
\end{eqnarray*}
Similarly for the case of $1\leq p<\infty$ we can obtain that
$$d_p(T((1+\alpha^{mn_1})e_{mn_1}),T((1+\alpha^{mn_2+1})e_{mn_2+1}),\cdots,T((1+\alpha^{mn_m+m-1})e_{mn_m+m-1}))$$
\begin{eqnarray*}
&=&\{\|T((1+\alpha^{mn_1})e_{mn_1})-T((1+\alpha^{mn_2+1})e_{mn_2+1})\|_q^p\\&&+\|T((1+\alpha^{mn_2+1})e_{mn_2+1})-T((1+\alpha^{mn_3+2})e_{mn_3+2})\|_q^p\\&&+\cdots+\|T((1+\alpha^{mn_{m}+m-1})e_{mn_{m}+m-1})-T((1+\alpha^{mn_1+})e_{mn_1})\|_q^p\}\\&\leq&\alpha\{[(1+\alpha^{mn_1+1})^q+(1+\alpha^{mn_2+2}+)^q]^{p\over q}\\&&+\cdots+[(1+\alpha^{mn_m+m}+)^q+(1+\alpha^{mn_1+1})^q]^{p\over q}\}^{1\over p}+m^{1\over p}2^{1\over q}(1-\alpha).
\end{eqnarray*}
Notice that there dose not exist an $x\in\cup_{i=1}^mA_i$ so that
$$d_p(x,Tx,\cdots,T^{m-1}x)=d_p(A_1,A_2,\cdots,A_m),$$ so the Corollary \ref{C1} fails if we remove the condition of finiteness of dimension for at least one of $A_i$s.
\end{example}
\begin{lemma}\label{V1}${}$

1)  If $p=\infty$, then there exists $1\leq i\leq m$ so that $d(x_{mn+i-1},x_{mn+i})\to d(A_i,A_{i+1})$ as $n\to\infty$.

2) If $1\leq p<\infty$, then for any $1\leq i\leq m$ we have that $d(x_{mn+i-1},x_{mn+i})\to d(A_i,A_{i+1})$ as $n\to\infty$.
\end{lemma}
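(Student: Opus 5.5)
The natural starting point is Theorem \ref{XX}, specialized to the indices $mn$. Put $D_n:=d_p(x_{mn},x_{mn+1},\cdots,x_{mn+m-1})$. Since $x_0\in A_1$ and $T$ is cyclic, $x_{mn+i-1}\in A_i$ for every $i$, with $x_{mn+m}\in A_1$. The $i$-th summand (respectively, entry of the maximum) of $D_n$ is $d(x_{mn+i-1},x_{mn+i})$ for $i<m$. For $i=m$ it is $d(x_{mn+m-1},x_{mn})$, which is not literally the step $d(x_{mn+m-1},x_{mn+m})$. I will therefore work with the actual consecutive quantities $a_i(n):=d(x_{mn+i-1},x_{mn+i})$. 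Each satisfies $a_i(n)\geq d(A_i,A_{i+1})=:\delta_i$, because $x_{mn+i-1}\in A_i$ and $x_{mn+i}\in A_{i+1}$. By Theorem \ref{XX}, $D_n\to d_p(A_1,\cdots,A_m)$. For the last index I would use the shifted tuple $d_p(x_{mn+1},\cdots,x_{mn+m})$, whose entries run around the cycle starting at $A_2$. Its closing term is $d(x_{mn+m},x_{mn+1})$ and it contains $d(x_{mn+m-1},x_{mn+m})$ as an ordinary consecutive entry. After relabeling the chain cyclically, Theorem \ref{XX} (which applies to any starting index by the same argument, as in the Remark after Theorem \ref{SS1}) covers this tuple as well. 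So I may assume every $a_i(n)$ sits as an entry of some $d_p$-quantity converging to the corresponding $d_p$ of the sets.

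\emph{Case $1\leq p<\infty$.} Write $D_n^p=\sum_{i}a_i(n)^p$, with $\delta_i^p$ the matching lower bounds, so that $\sum_i\delta_i^p=d_p(A_1,\cdots,A_m)^p$. Fix $i$. Then
\[0\leq a_i(n)^p-\delta_i^p\leq \sum_j\bigl(a_j(n)^p-\delta_j^p\bigr)=D_n^p-d_p(A_1,\cdots,A_m)^p\to0,\]
since every summand on the right is nonnegative. Hence $a_i(n)\to\delta_i$ for every $i$, which is part 2). This step is a squeeze: a sum of nonnegative terms tending to zero forces each term to zero.

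\emph{Case $p=\infty$.} Here $\max_i a_i(n)\to\max_i\delta_i=:\delta$, and the squeeze fails: only the maximal entries are controlled. Choose $i_0$ with $\delta_{i_0}=\delta$. Then $\delta\leq a_{i_0}(n)\leq D_n\to\delta$, so $a_{i_0}(n)\to\delta_{i_0}$, which gives part 1).

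The main obstacle is the bookkeeping at the closing index $i=m$, where the cyclic entry $d(x_{mn+m-1},x_{mn})$ of $D_n$ differs from the step $d(x_{mn+m-1},x_{mn+m})$. I handle it by the shifted tuple above. An alternative is to bound $d(x_{mn+m},x_{mn})$ via \eqref{FP8}, but the shift is cleaner. The possible degeneracy in part 1) is harmless, because the statement only asks for existence of one index.
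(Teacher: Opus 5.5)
Your proof is correct and follows the paper's argument: you squeeze each consecutive distance between its lower bound $d(A_i,A_{i+1})$ and the $d_p$-quantity of Theorem \ref{XX}, using a maximizing index when $p=\infty$ and coordinatewise monotonicity of the $\ell^p$-sum when $p<\infty$ (the paper argues the latter by contradiction, you by a direct squeeze on $p$-th powers). Your shifted tuple $d_p(x_{mn+1},\cdots,x_{mn+m})$ repairs a slip in the paper at $i=m$, where its bound $d(x_{mn+m-1},x_{mn+m})\leq d_p(x_{mn},\cdots,x_{mn+m-1})$ is not justified because the closing entry of that tuple is $d(x_{mn+m-1},x_{mn})$; since Theorem \ref{XX} is stated for all $n$, taking $n=mk+1$ there covers your shifted tuple directly, with no relabeling needed.
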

\begin{proof}
1)  Assume that $ d(A_i,A_{i+1})=d_\infty(A_1,A_2,\cdots,A_m)$. Thus
$$d(A_i,A_{i+1})\leq d(x_{mn+i-1},x_{mn+i})\leq d_\infty(x_{mn},x_{mn+1},x_{mn+2}\cdots,x_{mn+m-1}).$$ According to Theorem \ref{XX}, $d_\infty(x_n, x_{n+1},\cdots,x_{n+m-1})\to d_\infty(A_1,A_2,\cdots,A_m)$ as $n\to\infty$, which implies that $d(x_{mn+i-1},x_{mn+i})\to d(A_i,A_{i+1})$ as $n\to\infty$.

2) Let $1\leq i\leq m$. If $d(x_{mn+i-1},x_{mn+i})\not\to d(A_i,A_{i+1})$, then there exists $\varepsilon_0>0$ so that for each $N\in\mathbb{N}$, there exists $k\geq N$ so that $d(x_{mk+i-1},x_{mk+i})-d(A_i,A_{i+1})\geq\varepsilon_0$ that is $d(x_{mk+i-1},x_{mk+i})\geq d(A_i,A_{i+1})+\varepsilon_0$. Hence
\begin{eqnarray*}
0&<&\{d(A_1,,A_2)^p+\cdots+(d(A_{i},A_{i+1})+\varepsilon_0)^p+\cdots+d(A_{m-1},A_m)^p\}^{1\over p}\\&&-\{d(A_1,,A_2)^p+\cdots+d(A_{i},A_{i+1})^p+\cdots+d(A_{m-1},A_m)^p\}^{1\over p}\\
&\leq&\{d(x_{mk},x_{mk+1})^p+\cdots+d(x_{mk+i-1},x_{mk+i})^p+\cdots+d(x_{mk+m-1},x_{mk})^p\}^{1\over p}\\&&-\{d(A_1,,A_2)^p+\cdots+d(A_{i},A_{i+1})^p+\cdots+d(A_{m-1},A_m)^p\}^{1\over p},
\end{eqnarray*}
which contradicts $d_p(x_{mn}, x_{mn+1},\cdots,x_{mn+m-1})\to d_p(A_1,A_2,\cdots,A_m).$
\end{proof}
Finally, we assume that $d_p(A_1,A_2,\cdots,A_m)=0$. This leads to the following theorem, which is, in some sense, close to the famous Banach fixed point theorem.
\begin{theorem}\label{BANACH}
Let $T:\cup_{i=1}^mA_i\to\cup_{i=1}^mA_i$ be a cyclic $p$-$(1-\alpha)$-contraction for some $0<\alpha<1$, and let $d_p(A_1,A_2,\cdots,A_m)=0$. If $X$ is complete and $A_i$s are closed subsets of $X$, then $T$ has a unique fixed point $x\in\cap_{i=1}^mA_i$.
\end{theorem}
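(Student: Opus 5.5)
The plan is to run the Banach contraction argument on $m$-tuples. The first step is to simplify \eqref{CC}. Since $d_p(A_1,\cdots,A_m)=0$, every $d(A_i,A_{i+1})=0$: for $p<\infty$ a vanishing sum of $p$-th powers forces each term to vanish, and for $p=\infty$ the maximum vanishes. Taking $\varphi(t)=(1-\alpha)t$, \eqref{CC} then becomes
$$d_p(Tx_1,\cdots,Tx_m;Ty_1,\cdots,Ty_m)\leq \alpha\, d_p(x_1,\cdots,x_m;y_1,\cdots,y_m)\qquad(x_i,y_i\in A_i).$$

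The second step, which I expect to be the main obstacle, is iterating this inequality. The difficulty is that $(Tx_1,\cdots,Tx_m)$ lies in $A_2\times\cdots\times A_m\times A_1$ rather than $A_1\times\cdots\times A_m$, so \eqref{CC} cannot be reapplied verbatim. I would handle this by checking that $d_p(\,\cdot\,;\,\cdot\,)$ is invariant under a simultaneous cyclic relabelling. Put $u_1=Tx_m$, $u_{j}=Tx_{j-1}$ and $v_1=Ty_m$, $v_j=Ty_{j-1}$. Then $u_j,v_j\in A_j$, and the terms $d(u_j,v_{j+1})=d(Tx_{j-1},Ty_j)$ are exactly the terms of $d_p(Tx_1,\cdots,Tx_m;Ty_1,\cdots,Ty_m)$, cyclically permuted. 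Hence the sum (or maximum) is unchanged. Applying this with $x_i=y_i=x_{i-1}$ and iterating gives
$$D_n:=d_p(x_n,x_{n+1},\cdots,x_{n+m-1})\leq\alpha^n D_0,$$
where the window starting at $x_n$ is read cyclically relabelled.

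Next I would show that $\{x_n\}$ converges to a point of $\cap_{i=1}^mA_i$. Since $d(x_n,x_{n+1})$ is one of the terms of $D_n$, we get $d(x_n,x_{n+1})\leq D_n\leq \alpha^nD_0$. The geometric series then makes $\{x_n\}$ Cauchy, and completeness gives a limit $x$. For each $i$, the subsequence $\{x_{mn+i-1}\}$ lies in $A_i$ and converges to $x$. As the $A_i$ are closed, $x\in\cap_{i=1}^mA_i$.

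For the fixed point, continuity of $T$ is not available, so I use \eqref{CC} directly. Since $x$ belongs to every $A_i$, I may take $x_i=x$ for all $i$ and $y_i=x_{mn+i-1}\in A_i$. This gives
$$d(Tx,x_{mn+i+1})\leq d_p(Tx,\cdots,Tx;Ty_1,\cdots,Ty_m)\leq\alpha\, d_p(x,\cdots,x;y_1,\cdots,y_m)\leq \alpha\, m^{1/p}\max_j d(x,x_{mn+j-1}),$$
with $m^{1/p}=1$ when $p=\infty$. The right-hand side tends to $0$, so $x_{mn+i+1}\to Tx$. Since also $x_{mn+i+1}\to x$, we conclude $Tx=x$.

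For uniqueness, suppose $x,y\in\cap_{i=1}^m A_i$ are fixed points. Applying the reduced inequality to the constant tuples gives $m^{1/p}d(x,y)\leq\alpha\, m^{1/p}d(x,y)$. Since $\alpha<1$, this forces $x=y$.
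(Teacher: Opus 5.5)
Your proof is correct, and it gets the Cauchy property by a different route than the paper. Write $W_j=(x_{mj},\dots,x_{mj+m-1})$. The paper iterates the contraction on the cross quantities $d_p(W_n;W_{n-1})$ and chains them to make $d_p(W_n;W_k)$ small. From this it reads off that $d(x_{mn+i-1},x_{mk+i})$ is small. It then shows separately that each residue class $\{x_{mn+i-1}\}_n$ is Cauchy and that the limits $\hat x_i\in A_i$ coincide; these two steps additionally need $d(x_{mk+i-1},x_{mk+i})\to0$. You instead iterate on consecutive windows and get $d(x_n,x_{n+1})\le D_n\le\alpha^nD_0$. The usual geometric series then makes the whole orbit Cauchy, and $x\in\bigcap_iA_i$ follows at once from closedness.

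Your route is shorter and self-contained: it proves the decay of consecutive distances, with a rate, rather than needing it as an extra input. It also avoids the paper's chained estimate $d_p(W_n;W_k)\le\sum_{j=k}^{n-1}d_p(W_{j+1};W_j)$. That estimate is not justified as written, because $d_p(\cdot\,;\cdot)$ pairs the $i$-th entry of one tuple with the $(i+1)$-st of the other and so does not satisfy a triangle inequality. For instance, with $m=2$ and $a\neq b$, $d_p(a,b;b,a)=d_p(b,a;a,b)=0<d_p(a,b;a,b)$. The paper's cross-window formulation does match the quantity it controls later in the uniformly convex section, where $d_p(A_1,\dots,A_m)>0$ and consecutive distances need not tend to zero. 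Under $d_p(A_1,\dots,A_m)=0$, though, your direct route is the natural one.

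Your fixed-point and uniqueness steps are the paper's idea with different test tuples. Two small points to add. First, every fixed point automatically lies in $\bigcap_iA_i$ (if $y\in A_j$ then $y=Ty\in A_{j+1}$), so your uniqueness argument covers all fixed points. Second, the bound $d(x_n,x_{n+1})\le D_n$ uses $m\ge2$; for $m=1$ the statement is just Banach's theorem.
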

\begin{proof}Note that the assumption implies that $d(A_i,A_{i+1})=0$ for $i=1,\cdots,m$. For a proof we follow the method of the original proof of the Banach fixed point theorem. From the definition we have that
 \begin{eqnarray*}&&d_p(x_{mn},x_{mn+1},\cdots,x_{mn+m-1};x_{mn-m},x_{mn-m+1},\cdots,x_{mn-1})\\&\leq& \alpha d_p(x_{mn-1},x_{mn},\cdots,x_{mn+m-2};x_{mn-m-1},x_{mn-m},\cdots,x_{mn-2})\\&\leq& \alpha^2d_p(x_{mn-2},x_{mn-1},\cdots,x_{mn+m-3};x_{mn-m-2},x_{mn-m-1},\cdots,x_{mn-3})\\&.&\qquad\qquad\qquad\qquad\qquad\qquad\qquad\\&.&\qquad\qquad\qquad\qquad\qquad\qquad\qquad\\&.&\qquad\qquad\qquad\qquad\qquad\qquad\qquad\\&\leq& \alpha^{mn-m}d_p(x_{m},x_{m+1},\cdots,x_{2m-1};x_{0},x_{1},\cdots,x_{m-1}).\qquad
\end{eqnarray*}
Hence for positive integers $n$ and $k$ with $n\geq k$, obviously we have that
\begin{eqnarray*}
&&d_p(x_{mn},x_{mn+1},\cdots,x_{mn+m-1};x_{mk},x_{mk+1},\cdots,x_{mk+m-1})\\&\leq&d_p(x_{mn},x_{mn+1},\cdots,x_{mn+m-1};x_{mn-m},x_{mn-m+1},\cdots,x_{mn-1})\\&&+d_p(x_{mn-m},x_{mn-m+1},\cdots,x_{mn-1};x_{mn-2m},x_{mn-2m+1},\cdots,x_{mn-m-1})\\&&.\\&&.\\&&.\\&&+d_p(x_{mk+m},x_{mk+m+1},\cdots,x_{mk+2m-1};x_{mk},x_{mk+1},\cdots,x_{mk+m-1})\\&\leq&\left(\alpha^{mn-m}+\alpha^{mn-2m}+\cdots+\alpha^{mk}\right)\\&&\times d_p(x_{m},x_{m+1},\cdots,x_{2m-1};x_{0},x_{1},\cdots,x_{m-1})\\&=&\alpha^{mk}\left({{1-\alpha^{mn-mk}}\over{1-\alpha}}\right)d_p(x_{m},x_{m+1},\cdots,x_{2m-1};x_{0},x_{1},\cdots,x_{m-1})\\&\leq&\left({{\alpha^{mk}}\over{1-\alpha}}\right)d_p(x_{m},x_{m+1},\cdots,x_{2m-1};x_{0},x_{1},\cdots,x_{m-1}).
\end{eqnarray*}
Thus, since $0<\alpha<1$, for a given $\varepsilon>0$ we can find a positive integer $N$ so that for $k\geq N$
\begin{eqnarray*}
\left({{\alpha^{mk}}\over{1-\alpha}}\right)d_p(x_{m},x_{m+1},\cdots,x_{2m-1};x_{0},x_{1},\cdots,x_{m-1})<\varepsilon.
\end{eqnarray*}
 Therefore, for any $\varepsilon>0$ there exists a positive integer $N$ so that if $n\geq k\geq N$, then
\begin{eqnarray}\label{BAN1}
d_p(x_{mn},x_{mn+1},\cdots,x_{mn+m-1};x_{mk},x_{mk+1},\cdots,x_{mk+m-1})<\varepsilon.
\end{eqnarray}
Now, we claim that

($\ast$) for $i=1,\cdots,m$ and for $\varepsilon>0$ there exists a positive integer $N$ such that if $n>k\geq N$ then $d(x_{mn+i-1},x_{mk+i})<\varepsilon.$

Assume that ($\ast$) is not true for some $i\in\{1,\cdots,m\}$. Hence there exists an $\varepsilon_0$ so that for any positive integer $t$ there exist $n_t\geq k_t\geq t$ such that $d(x_{mn_t+i-1},x_{mk_t+i})\geq\varepsilon_0.$ But by \eqref{BAN1}  there exists a positive integer $N$ so that if $n\geq k\geq N$, then
\begin{eqnarray*}
d_p(x_{mn},x_{mn+1},\cdots,x_{mn+m-1};x_{mk},x_{mk+1},\cdots,x_{mk+m-1})<\varepsilon_0.
\end{eqnarray*}
Pick $n_N\geq k_N\geq N$ we have that
\begin{eqnarray*}
\varepsilon_0&\leq& d(x_{mn_N+i-1},x_{mk_N+i})\\&\leq& d_p(x_{mn_N},x_{mn_N+1},\cdots,x_{mn_N+m-1};x_{mk_N},x_{mk_N+1},\cdots,x_{mk_N+m-1})\\&<&\varepsilon_0;
\end{eqnarray*} a contradiction that ensures the validity of ($\ast$). But,
($\ast$) implies that $\{x_{mn+i-1}\}_{n\in\mathbb{N}}$ is in fact a Cauchy sequence in $A_i$, because of
$$d(x_{mn+i-1},x_{mk+i-1})\leq d(x_{mn+i-1},x_{mk+i})+d(x_{mk+i-1},x_{mk+i}).$$
 Since $X$ is complete and $A_i$ is assumed to be closed in $X$, $\{x_{mn+i-1}\}$ converges to an element of $A_i$, $\hat{x}_i$ says. On the other hand $$d(\hat{x}_i,x_{mn+i})\leq d(\hat{x}_i,x_{mn+i-1})+d(x_{mn+i-1},x_{mn+i})$$ which ensures that $\hat{x}_i=\hat{x}_{i+1}$ for $i=1,\cdots,m$. Thus $\{x_n\}$ is convergent to an $x$ in $\cap_{i=1}^mA_i\neq\emptyset$.

 To show that $x$ is a fixed point for $T$, we note that
 \begin{eqnarray*}
 d(x,Tx)&\leq& d(x_n,x)+d(Tx_{n-1},Tx)\\&\leq& d(x_n,x)+d_p(Tx_{n-1},Tx,Tx,\cdots,Tx)\\&\leq& d(x,x_n)+\alpha d_p(x_{n-1},x,x,\cdots,x).
 \end{eqnarray*}But $x_n\to x$ as $n\to\infty$, that implies that $d(x,Tx)=0$ i.e. $x$ is a fixed point for $T$. Now, if $y$ is another fixed point for $T$, that means $Ty=y$, then
 \begin{eqnarray*}d(x,y)&=&d(Tx,Ty)\\&=& {{1}\over{2^{{1}\over{p}}}}d_p(Tx,Ty,Tx,Tx,\cdots,Tx)\\&\leq&\alpha {{1}\over{2^{{1}\over{p}}}}d_p(x,y,x,\cdots,x)\\&=&\alpha d(x,y).\end{eqnarray*} Since $0<\alpha<1$, this implies that $d(x,y)=0$ which establishes the uniqueness of the fixed point of $T$.
 \end{proof}
\section{Uniformly convex Banach spaces}
In this section we state some fixed point theorems for mappings defined on uniformly convex Banach spaces. The results of this section are mainly the generalization of those of \cite[Section 2]{ThSh}. Here, we assume that $(X,\|\cdot\|)$ is a uniformly convex Banach space and $A_1,\cdots,A_m\subset X$ and $T:\cup_{i=1}^mA_i\to\cup_{i=1}^mA_i$ is a cyclic $p$-$\varphi$-contraction mapping and $x_0\in A_1$ and $x_n:=Tx_{n-1}$ for all natural numbers $n$,
unless we have another assumption in our mind, in which case we state it explicitly.
\begin{proposition}\label{VV}Let $A_{i_0}$ be convex when $d(A_{i_0},A_{i_0+1})=d_\infty(A_1,\cdots,A_m)$. Assume that $T$ is a cyclic $\infty$-$\varphi$-contraction mapping for some strictly increasing function $\varphi:[0,\infty)\to[0,\infty)$. Then $\|x_{mn+i_0-1}-x_{mn+m+i_0-1}\|\to0$ and $\|x_{mn+i_0}-x_{mn+m+i_0}\|\to0$.
\end{proposition}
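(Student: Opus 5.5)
The plan is to follow the Eldred--Veeramani / Thagafi--Shahzad argument for two sets, adapted to the chain $A_1,\dots,A_m$. Put $D:=d(A_{i_0},A_{i_0+1})=d_\infty(A_1,\cdots,A_m)$, and use the shorthands
$$u_n:=x_{mn+i_0-1}, \qquad u_n':=x_{mn+m+i_0-1}\ (\text{both in }A_{i_0}), \qquad v_n:=x_{mn+i_0}\ (\text{in }A_{i_0+1}).$$
The first step is to show that the three distances $\|u_n-v_n\|$, $\|u_n'-v_n\|$ and $\|u_n'-v_{n+1}\|$ all tend to $D$.

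For $\|u_n-v_n\|$ this is exactly Lemma \ref{V1}(1), applied with the index $i_0$ at which the maximum is attained. For the cross term $\|u_n'-v_n\|$ I would take the pair of tuples
$$(x_{mn+m},\dots,x_{mn+2m-1})\quad\text{and}\quad(x_{mn},\dots,x_{mn+m-1}).$$
The $i_0$-th term of their $d_\infty$ is precisely $\|u_n'-v_n\|$. Put $s_n:=d_\infty(x_{mn+m},\dots,x_{mn+2m-1};x_{mn},\dots,x_{mn+m-1})$. By \eqref{FP8}, $(s_n)$ is nonincreasing, so it has a limit $s\ge D$. Iterating \eqref{CC} $m$ times gives $s_{n+1}\le s_n-\varphi(s_n)+\varphi(D)\le s_n-\varphi(s)+\varphi(D)$. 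Letting $n\to\infty$ yields $\varphi(s)\le\varphi(D)$, hence $s=D$ because $\varphi$ is strictly increasing. Since $D\le\|u_n'-v_n\|\le s_n$, we get $\|u_n'-v_n\|\to D$. The same squeeze, with shifted tuples, handles any other cross distance between consecutive sets that we need.

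The second step proves the first assertion. If $D=0$, then $\|u_n-u_n'\|\le\|u_n-v_n\|+\|v_n-u_n'\|\to0$. If $D>0$, suppose instead that $\|u_n-u_n'\|\ge\varepsilon$ along a subsequence. Set $w=v_n$ and $R_n=\max\{\|u_n-v_n\|,\|u_n'-v_n\|\}$, so $R_n\to D$. Convexity of $A_{i_0}$ puts the midpoint $(u_n+u_n')/2$ in $A_{i_0}$, and uniform convexity (with $r=\varepsilon\le 2R_n$) gives
$$D\le\left\|\tfrac{u_n+u_n'}{2}-v_n\right\|\le\left(1-\delta(\varepsilon/R_n)\right)R_n.$$
Eventually $\varepsilon/R_n\ge\varepsilon/(D+1)$, and $\delta$ is increasing, so the right-hand side has $\limsup$ at most $(1-\delta(\varepsilon/(D+1)))D<D$. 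This contradiction gives $\|x_{mn+i_0-1}-x_{mn+m+i_0-1}\|\to0$.

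The third step, the second assertion $\|v_n-v_{n+1}\|\to0$, is where I expect the real obstacle. Here $v_n,v_{n+1}\in A_{i_0+1}$, and that set is not assumed convex, so the midpoint trick cannot be run with the roles of the two sets swapped. My first attempt would be to run the same uniform convexity argument with centre $w=u_n'\in A_{i_0}$, using that $\|u_n'-v_n\|\to D$ (from the first step) and $\|u_n'-v_{n+1}\|\to D$ (by Lemma \ref{V1}(1) at level $n+1$). The aim is then to produce a point of $A_{i_0}$, for instance $(u_n+u_n')/2$, which by the second step is within $o(1)$ of $u_n'$, whose distance to some point of $A_{i_0+1}$ would have to fall below $D$ if $\|v_n-v_{n+1}\|\ge\varepsilon$.

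If that fails, I would fall back on the contraction itself. Writing $v_n=Tu_n$ and $v_{n+1}=Tu_n'$, I would try to choose tuples in \eqref{CC} so that $\|v_n-v_{n+1}\|$ is controlled by cross terms already shown to tend to $D$, together with the fact from the second step that $u_n$ and $u_n'$ are close. Making one of these two routes rigorous without convexity of $A_{i_0+1}$ is the step I would scrutinize most carefully.
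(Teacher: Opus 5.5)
Your Steps 1 and 2 are correct, and they are the paper's argument for the first assertion: uniform convexity with centre $x_{mn+i_0}$, applied to $x_{mn+i_0-1}$ and $x_{mn+m+i_0-1}$, whose midpoint lies in the convex set $A_{i_0}$. Your squeeze with $s_n$ is actually needed. The paper obtains $\|x_{mn+m+i_0-1}-x_{mn+i_0}\|\to d(A_{i_0},A_{i_0+1})$ from the lemma preceding Theorem~\ref{BANACH}, but that lemma only concerns consecutive iterates, and these two iterates are consecutive only when $m=2$.

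The gap is Step 3, and it cannot be filled. The paper's proof stops after the first assertion, and the second assertion is false when only $A_{i_0}$ is convex. Here is a counterexample.

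Take $X=\mathbb{R}^2$ with the Euclidean norm, $m=2$, $i_0=1$, $0<\alpha<1$ and $\varphi(t)=(1-\alpha)t$. Let $A_1=\{(0,t):-1\le t\le 0\}$, $b_k=((-1)^{k+1},1+\alpha^{2k+1})$ and $A_2=\{b_k:k\ge 0\}\cup\{(-1,1),(1,1)\}$. Both sets are closed, $A_1$ is convex, and $d(A_1,A_2)=\sqrt2$. Define $T(0,0)=(1,1)$, $T(0,t)=b_{k(t)}$ for $t<0$ with $k(t)=\min\{k:\alpha^{2k}\le|t|\}$, $Tb_k=(0,-\alpha^{2k+2})$ and $T(\pm1,1)=(0,0)$.

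For $x=(0,-a)\in A_1$ and $y=(\pm1,1+e)\in A_2$ you have $Tx=(\pm1,1+e')$ with $e'\le\alpha a$, and $Ty=(0,-\alpha e)$. With the increasing convex function $f(s)=\sqrt{1+(1+s)^2}$,
\[
\|Tx-Ty\|=f(e'+\alpha e)\le f(\alpha(a+e))\le \alpha f(a+e)+(1-\alpha)f(0)=\alpha\|x-y\|+(1-\alpha)\sqrt2 .
\]
Taking the maximum over the pairs $(x_1,y_2)$ and $(y_1,x_2)$ gives \eqref{CC} with $p=\infty$. Indeed, for this $\varphi$ the right side of \eqref{CC} is $\alpha M+(1-\alpha)\sqrt2$ with $M=d_\infty(x_1,x_2;y_1,y_2)$.

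From $x_0=(0,-1)$ you get $x_{2n}=(0,-\alpha^{2n})$ and $x_{2n+1}=b_n$. So $\|x_{2n}-x_{2n+2}\|\to0$, as your Step 2 predicts, but $\|x_{2n+1}-x_{2n+3}\|\ge 2$ for every $n$.

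This example shows exactly where both of your routes break.
\begin{itemize}
\item First route: the centre $u_n'\to(0,0)$ is at distance tending to $\sqrt2$ from both $v_n$ and $v_{n+1}$, but their midpoint tends to $(0,1)$, which is not in $A_2$, so no contradiction arises.
\item Second route: \eqref{CC} never compares $Tx$ with $Ty$ for $x,y$ both in $A_{i_0}$. Hence $\|u_n-u_n'\|\to0$ gives no control of $\|Tu_n-Tu_n'\|$ when $T$ is discontinuous, as here.
\end{itemize}

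What is true is the second assertion under the extra hypothesis that $A_{i_0+1}$ is convex; when $D=0$ it also holds by the triangle inequality alone. With $A_{i_0+1}$ convex, your first route works verbatim: use centre $u_n'$ and the midpoint $(v_n+v_{n+1})/2\in A_{i_0+1}$. The needed limits are $\|u_n'-v_n\|\to D$, from your Step 1, and $\|u_n'-v_{n+1}\|\to D$, from the consecutive-iterate lemma at level $n+1$.
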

\begin{proof}
Assume that $\|x_{mn+i_0-1}-x_{mn+m+i_0-1}\|\not\to0$. Thus there exists an $\varepsilon_0>0$ so that for any $k\geq0$ there exists $n_k\geq k$ so that \begin{eqnarray}\label{V2}\|x_{mn_{n_k}+i_0-1}-x_{mn_{n_k}+m+i_0-1}\|\geq\varepsilon_0.\end{eqnarray}Pick $\gamma>0$ that meets ${{\varepsilon_0}\over{\gamma}}>d(A_{i_0},A_{i_0+1})$ and choose an $\varepsilon>0$ satisfying
\begin{eqnarray*}
0<\varepsilon<\min\left\{{{\varepsilon_0}\over{\gamma}}-d(A_{i_0},A_{i_0+1}),{{d(A_{i_0},A_{i_0+1})\delta(\gamma)}\over{1-\delta(\gamma)}}\right\}.
\end{eqnarray*}

According to Lemma \ref{V1} $\|x_{mn+i_0-1}-x_{mn+i_0}\|\to d(A_{i_0},A_{i_0+1})$. This means that there exist $N_1$ and $N_2$ so that
\begin{eqnarray}\label{V3}
n_k\geq N_1\Rightarrow\;\|x_{mn_k+i_0-1}-x_{mn_k+i_0}\|\leq d(A_{i_0},A_{i_0+1})+\varepsilon,
\end{eqnarray}
\begin{eqnarray}\label{V4}
n_k\geq N_2\Rightarrow\;\|x_{mn_k+m+i_0-1}-x_{mn_k+i_0}\|\leq d(A_{i_0},A_{i_0+1})+\varepsilon.
\end{eqnarray}
Let $N:=\max\{N_1,N_2\}$ and $n_k\geq N$. Thus from \eqref{V2}-\eqref{V4} and uniform convexity of $X$ we have that
\begin{eqnarray*}
\left\|{{x_{mn_k+i_0-1}+x_{m(n_k+1)+i_0-1} }\over{2}}-x_{mn_k+i_0}\right\|&\leq&\left(1-\delta\left({{\varepsilon_0}\over{d(A_{i_0},A_{i_0+1})+\varepsilon}}\right)\right)\times\\&&(d(A_{i_0},A_{i_0+1})+\varepsilon)
\end{eqnarray*}
because of the fact that $A_{i_0}$ is convex. According to the choice of $\varepsilon$ and the fact that $\delta$ is strictly increasing we can deduce that
\begin{eqnarray*}
\left\|{{x_{mn_k+i_0-1}+x_{m(n_k+1)+i_0-1} }\over{2}}-x_{mn_k+i_0}\right\|< d(A_{i_0},A_{i_0+1})
\end{eqnarray*} for $n_k\geq N$ which is a contradiction due to the definition of $d(A_{i_0},A_{i_0+1})$.
\end{proof}
\begin{remark}\label{PO1}In the cases when $1\leq p<\infty$,  and $A_i$s are convex subsets with odd indices, if $T$ is a cyclic $p$-$\varphi$-contraction mapping, by the same method as Proposition \ref{VV},we could show that $\|x_{mn+i}-x_{mn+m+i}\|\to0$ holds true for all $i\in\{0,\cdots,m-1\}$.
\end{remark}
\begin{theorem} Let $A_i$s be convex subsets with odd indices. Then for each $\varepsilon>0$, there exists a positive integer $N_0$ so that for all $n,k\geq N_0$
$$d_p(x_{mn},x_{mn+1},\cdots,x_{mn+m-1};x_{mk},x_{mk+1},\cdots,x_{mk+m-1})<d_p(A_1,\cdots,A_m)+\varepsilon.$$
\end{theorem}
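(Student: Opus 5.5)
The plan is to adapt the argument of \cite[Theorem 3 / Lemma]{ThSh} (the two-set case) to the chain $A_1,\dots,A_m$, arguing by contradiction and using uniform convexity through Remark \ref{PO1} and Lemma \ref{V1}.

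Suppose the statement fails. Then there are $\varepsilon_0>0$ and, for every $k$, integers $n_k>m_k\geq k$ such that
$$d_p(x_{mn_k},\cdots,x_{mn_k+m-1};x_{mm_k},\cdots,x_{mm_k+m-1})\geq d_p(A_1,\cdots,A_m)+\varepsilon_0.$$
We choose $n_k$ minimal with this property for the given $m_k$. By Remark \ref{PO1}, $\|x_{mn+i}-x_{mn+m+i}\|\to0$ for every $i$. Consequently, for large $k$ the index $n_k-1$ still satisfies $n_k-1>m_k$. By minimality,
$$d_p(x_{mn_k-m},\cdots,x_{mn_k-1};x_{mm_k},\cdots,x_{mm_k+m-1})<d_p(A_1,\cdots,A_m)+\varepsilon_0.$$
Since each coordinate of $d_p$ moves by at most $\|x_{mn_k+i}-x_{mn_k-m+i}\|\to0$, Minkowski's inequality (or the max, when $p=\infty$) shows that the quantity
$$D_k:=d_p(x_{mn_k},\cdots;x_{mm_k},\cdots)$$
tends to $d_p(A_1,\cdots,A_m)+\varepsilon_0$.

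Next we push both blocks forward by $m$ steps, applying the contraction \eqref{CC} $m$ times via \eqref{FP8}:
$$D_k':=d_p(x_{mn_k+m},\cdots;x_{mm_k+m},\cdots)\leq D_k-\varphi(D_k)+\varphi(d_p(A_1,\cdots,A_m)).$$
The one-step version of this bound is the key ingredient; iterating \eqref{FP8} for the remaining steps preserves it. On the other hand, Remark \ref{PO1} again gives $|D_k'-D_k|\to0$, because each entry is shifted by $m$ indices. Letting $k\to\infty$ yields
$$\varphi(d_p(A_1,\cdots,A_m))\geq\liminf_k\varphi(D_k)\geq\varphi\bigl(d_p(A_1,\cdots,A_m)+\varepsilon_0\bigr),$$
where the last inequality uses monotonicity together with $D_k\geq d_p(A_1,\cdots,A_m)+\varepsilon_0$. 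Since $\varphi$ is strictly increasing, this is a contradiction.

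The main obstacle I anticipate is the index bookkeeping. The pairs in $d_p(\cdot;\cdot)$ are $(x_{mn+i-1},x_{mk+i})$, which lie in $A_i\times A_{i+1}$, so that \eqref{CC} applies. The shift by $m$ must therefore be controlled in every coordinate, which is exactly what Remark \ref{PO1} supplies for all $i$; the convexity hypothesis on the $A_i$ enters only through that remark. A second concern is ensuring that the minimal choice of $n_k$ exists with $n_k-1>m_k$. I will handle this by first fixing $m_k$ large enough that $d_p(x_{mm_k+m},\cdots;x_{mm_k},\cdots)<d_p(A_1,\cdots,A_m)+\varepsilon_0$. This is possible by Theorem \ref{XX} combined with Remark \ref{PO1}: the diagonal-adjacent term is close to $d_p(x_{mm_k},\dots,x_{mm_k+m-1})$, which tends to $d_p(A_1,\dots,A_m)$. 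With that choice, the minimal $n_k$ is automatically at least $m_k+2$.
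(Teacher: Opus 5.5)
Your proposal is correct and follows the paper's proof essentially step for step. Both argue by contradiction with a minimal choice of the larger index, compare $D_k$ with its $m$-shift via Remark~\ref{PO1}, bound the $m$-shift by the one-step shift through \eqref{FP8} and then by \eqref{CC}, and conclude from the strict monotonicity of $\varphi$; your check that the minimal index satisfies $n_k-1>m_k$ fills a point the paper leaves implicit.

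As in the paper, you only treat $n>k$; since $d_p(\cdot\,;\cdot)$ is not symmetric in its two blocks, the case $n<k$ needs the same argument with the roles exchanged. The minimality step can in fact be dropped altogether, because your final contradiction uses only $D_k\geq d_p(A_1,\cdots,A_m)+\varepsilon_0$ and $|D_k'-D_k|\to0$.
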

\begin{proof}Assume that the conclusion of the theorem is not true, meaning that there exists an $\varepsilon_0>0$ so that for each positive integer $t$ there are $n_t\geq k_t\geq t$ satisfying
\begin{eqnarray*}d_p(x_{mn_t},x_{mn_t+1},\cdots,x_{mn_t+m-1};x_{mk_t},x_{mk_t+1},\cdots,x_{mk_t+m-1})\\\geq d_p(A_1,\cdots,A_m)+\varepsilon_0\qquad\qquad\qquad\qquad\qquad\end{eqnarray*} and
\begin{eqnarray*}d_p(x_{mn_t-m},x_{mn_t-m+1},\cdots,x_{mn_t-1};x_{mk_t},x_{mk_t+1},\cdots,x_{mk_t+m-1})\\<d_p(A_1,\cdots,A_m)+\varepsilon_0.\qquad\qquad\qquad\qquad\qquad\end{eqnarray*}
Hence
\begin{eqnarray*}d_p(A_1,\cdots,A_m)+\varepsilon_0&\leq& d_p(x_{mn_t},x_{mn_t+1},\cdots,x_{mn_t+m-1};x_{mk_t},x_{mk_t+1},\cdots,x_{mk_t+m-1})\\&=&\left(\sum_{i=1}^m\|x_{mn_t+i}-x_{mk_t+i+1}\|^p\right)^{1\over p}\\&\leq&\left(\sum_{i=1}^m\|x_{mn_t+i}-x_{mn_t-m+i}\|^p\right)^{1\over p}\\&&+\left(\sum_{i=1}^m\|x_{mn_t-m+i}-x_{mk_t+i+1}\|^p\right)^{1\over p}\\&\leq&\left(\sum_{i=1}^m\|x_{mn_t+i}-x_{mn_t-m+i}\|^p\right)^{1\over p}+d_p(A_1,\cdots,A_m)+\varepsilon_0.
\end{eqnarray*}
Letting $t\to\infty$ and using Proposition \ref{VV} and Remark \ref{PO1} we have that  $$\left(\sum_{i=1}^m\|x_{mn_t+i}-x_{mn_t-m+i}\|^p\right)^{1\over p}\to0.$$ Therefore,
\begin{eqnarray}\label{SD}
d_p(x_{mn_t},x_{mn_t+1},\cdots,x_{mn_t+m-1};x_{mk_t},x_{mk_t+1},\cdots,x_{mk_t+m-1})\nonumber\\\to d_p(A_1,\cdots,A_m)+\varepsilon_0.\qquad\qquad\qquad\qquad\qquad
\end{eqnarray}
On the other hand
%\end{eqnarray*}\begin{eqnarray*}
\begin{eqnarray*}
\left(\sum_{i=1}^m\|x_{mn_t+i}-x_{mk_t+i+1}\|^p\right)^{1\over p}&\leq&\left(\sum_{i=1}^m\|x_{mn_t+i}-x_{mn_t+m+i}\|^p\right)^{1\over p}\\&&+\left(\sum_{i=1}^m\|x_{mn_t+m+i}-x_{mk_t+m+i+1}\|^p\right)^{1\over p}\\&&+\left(\sum_{i=1}^m\|x_{mk_t+m+i+1}-x_{mk_t+i+1}\|^p\right)^{1\over p}\end{eqnarray*}\begin{eqnarray*}&\leq&\left(\sum_{i=1}^m\|x_{mn_t+i}-x_{mn_t+m+i}\|^p\right)^{1\over p}\\&&+\left(\sum_{i=1}^m\|x_{mn_t+1+i}-x_{mk_t+1+i+1}\|^p\right)^{1\over p}\\&&+\left(\sum_{i=1}^m\|x_{mk_t+m+i+1}-x_{mk_t+i+1}\|^p\right)^{1\over p}\\&\leq&\left(\sum_{i=1}^m\|x_{mn_t+i}-x_{mn_t+m+i}\|^p\right)^{1\over p}\\&&+\left(\sum_{i=1}^m\|x_{mn_t+i}-x_{mk_t+i+1}\|^p\right)^{1\over p}\\&&-\varphi\left(\left(\sum_{i=1}^m\|x_{mn_t+i}-x_{mk_t+i+1}\|^p\right)^{1\over p}\right)\\&&+\varphi(d_p(A_1,\cdots,A_m))\\&&+\left(\sum_{i=1}^m\|x_{mk_t+m+i+1}-x_{mk_t+i+1}\|^p\right)^{1\over p}\\&\leq&\left(\sum_{i=1}^m\|x_{mn_t+i}-x_{mn_t+m+i}\|^p\right)^{1\over p}\\&&+\left(\sum_{i=1}^m\|x_{mn_t+i}-x_{mk_t+i+1}\|^p\right)^{1\over p}\\&&+\left(\sum_{i=1}^m\|x_{mk_t+m+i+1}-x_{mk_t+i+1}\|^p\right)^{1\over p}.
\end{eqnarray*}
Now, letting $t\to\infty$, we come to
\begin{eqnarray*}
d_p(A_1,\cdots,A_m)+\varepsilon_0&\leq&d_p(A_1,\cdots,A_m)+\varepsilon_0\\&&-\lim_{t\to\infty}\varphi\left(\left(\sum_{i=1}^m\|x_{mn_t+i}-x_{mk_t+i+1}\|^p\right)^{1\over p}\right)\\&&+\varphi(d_p(A_1,\cdots,A_m))\\&\leq&d_p(A_1,\cdots,A_m)+\varepsilon_0.
\end{eqnarray*}
Hence
\begin{eqnarray*}
\lim_{t\to\infty}\varphi\left(\left(\sum_{i=1}^m\|x_{mn_t+i}-x_{mk_t+i+1}\|^p\right)^{1\over p}\right)=\varphi(d_p(A_1,\cdots,A_m)).
\end{eqnarray*}
But $\varphi$ is strictly increasing, so by \eqref{SD} we have that
\begin{eqnarray*}
\varphi(d_p(A_1,\cdots,A_m)+\varepsilon_0)&\leq&\lim_{t\to\infty}\varphi\left(\left(\sum_{i=1}^m\|x_{mn_t+i}-x_{mk_t+i+1}\|^p\right)^{1\over p}\right)\\&=&\varphi(d_p(A_1,\cdots,A_m))\\&<&\varphi(d_p(A_1,\cdots,A_m)+\varepsilon_0)
\end{eqnarray*}
which is impossible.
\end{proof}
Note that in the recent theorem if $p=\infty$, then it is sufficient that only one of $A_i$s is convex.
\begin{corollary}\label{V1}Let $A_i$s with odd indices be convex subsets. Then

1)  If $p=\infty$, and if $d(A_{i_0},A_{i_0+1})=d_\infty(A_1,\cdots,A_m)$ then for $\varepsilon>0$ there exists a positive integer $N$ so that for any $n\geq k\geq N$ we have that $\|x_{mn+i_0-1}-x_{mk+i_0}\|<d(A_{i_0},A_{i_0+1})+\varepsilon$.

2) If $1\leq p<\infty$, then for $\varepsilon>0$ and for any $1\leq i\leq m$  there exists $N_i$ so that for any $n\geq k\geq N_i$ we have that $\|x_{mn+i-1}-x_{mk+i}\|<d(A_i,A_{i+1})+\varepsilon$.
\end{corollary}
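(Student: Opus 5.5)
The plan is to read the corollary off the preceding theorem. That theorem gives, for every $\varepsilon'>0$, an integer $N_0$ such that for all $n,k\geq N_0$
$$d_p(x_{mn},\cdots,x_{mn+m-1};x_{mk},\cdots,x_{mk+m-1})<d_p(A_1,\cdots,A_m)+\varepsilon'.$$
The $i$-th summand of the left side is exactly $\|x_{mn+i-1}-x_{mk+i}\|$ (with indices read cyclically, the last one being $\|x_{mn+m-1}-x_{mk}\|$). Each such term is at least $d(A_i,A_{i+1})$, since $x_{mn+i-1}\in A_i$ and $x_{mk+i}\in A_{i+1}$. So the task is to isolate one coordinate, using that all the other coordinates are already bounded below by the corresponding set distances.

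For part 1) ($p=\infty$), take $\varepsilon'=\varepsilon$. Then for $n,k\geq N_0$ every entry of the maximum is below $d_\infty(A_1,\cdots,A_m)+\varepsilon=d(A_{i_0},A_{i_0+1})+\varepsilon$. In particular the $i_0$-th entry $\|x_{mn+i_0-1}-x_{mk+i_0}\|$ is below this bound, and nothing more is needed. By the remark after the theorem, convexity of the single set $A_{i_0}$ suffices here, through Proposition \ref{VV}.

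For part 2) ($1\leq p<\infty$), suppose the claim fails for some $i$. Then there are $\varepsilon_0>0$ and indices $n_t\geq k_t\geq t$ with $\|x_{mn_t+i-1}-x_{mk_t+i}\|\geq d(A_i,A_{i+1})+\varepsilon_0$. Bounding every other summand from below by $d(A_j,A_{j+1})^p$ gives
$$d_p(x_{mn_t},\cdots;x_{mk_t},\cdots)\geq\Big(\sum_{j\neq i}d(A_j,A_{j+1})^p+(d(A_i,A_{i+1})+\varepsilon_0)^p\Big)^{1/p}=d_p(A_1,\cdots,A_m)+\eta,$$
where $\eta>0$ by strict monotonicity of $t\mapsto(c+t^p)^{1/p}$. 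Applying the theorem with $\varepsilon'=\eta$ and taking $t\geq N_0$ yields a contradiction. This is the same device used in part 2) of Lemma \ref{V1}.

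The only delicate point is the choice of $\eta$ in part 2). It depends only on $\varepsilon_0$, $p$, and the fixed constants $d(A_j,A_{j+1})$, not on $t$, which is what makes the contradiction legitimate. The restriction $n\geq k$ plays no role, since the theorem holds for all $n,k\geq N_0$.
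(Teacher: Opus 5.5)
Your proposal is correct and matches the paper's proof: part 1) reads the $i_0$-th entry off the $d_\infty$ bound of the preceding theorem, and part 2) is the same contradiction argument, with your $\eta$ playing the role of the paper's $\delta$. One small nit, shared with the paper, is that for $i=m$ the corollary's term is $\|x_{mn+m-1}-x_{m(k+1)}\|$ rather than $\|x_{mn+m-1}-x_{mk}\|$; you fix this by applying the theorem to the pair $(n,k+1)$, which is legitimate precisely because, as you note, it holds for all $n,k\geq N_0$.
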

\begin{proof}
1) Let $\varepsilon>0$. According to the previous theorem there exists a positive integer $N$ so that for $n\geq k\geq N$ we have that $$d_\infty(x_{mn},x_{mn+1},\cdots,x_{mn+m-1};x_{mk},x_{mk+1},\cdots,x_{mk+m-1})<d(A_{i_0},A_{i_0+1})+\varepsilon.$$ Now the result is obvious because
$$\|x_{mn+i_0-1}-x_{mk+i_0}\|\leq d_\infty(x_{mn},x_{mn+1},\cdots,x_{mn+m-1};x_{mk},x_{mk+1},\cdots,x_{mk+m-1}).$$

2) Assume that 2) is not true and there exists some $i\in\{1,\cdots,m\}$ and $\varepsilon_0>0$ so that for any positive integer $N$ there exist $n_t\geq k_t\geq N$ and $\|x_{mn_t+i-1},x_{mk_t+i}\|\geq d(A_i,A_{i+1})+\varepsilon_0$. Let $\delta$ be a positive number with
\begin{eqnarray*}
0<\delta&<&\{d(A_1,,A_2)^p+\cdots+(d(A_{i},A_{i+1})+\varepsilon_0)^p+\cdots+d(A_{m-1},A_m)^p\}^{1\over p}\\&&-\{d(A_1,,A_2)^p+\cdots+d(A_{i},A_{i+1})^p+\cdots+d(A_{m-1},A_m)^p\}^{1\over p}.
\end{eqnarray*}
Now by the previous theorem we can find a positive integer $N_0$ such that for all $n\geq k\geq N_0$ we have that
$$d_p(x_{mn},x_{mn+1},\cdots,x_{mn+m-1};x_{mk},x_{mk+1},\cdots,x_{mk+m-1})-d_p(A_1,\cdots,A_m)<\delta.$$
But this is a contadiction because if $n_t\geq k_t\geq N_0$ with $$\|x_{mn_t+i-1}-x_{mk_t+i}\|\geq d(A_i,A_{i+1})+\varepsilon_0$$ then
\begin{eqnarray*}
0<\delta&<&\{d(A_1,,A_2)^p+\cdots+(d(A_{i},A_{i+1})+\varepsilon_0)^p+\cdots+d(A_{m-1},A_m)^p\}^{1\over p}\\&&-\{d(A_1,,A_2)^p+\cdots+d(A_{i},A_{i+1})^p+\cdots+d(A_{m-1},A_m)^p\}^{1\over p}\\
&\leq&\{\|x_{mn_t}-x_{mk_t+1}\|^p+\cdots+\|x_{mn_t+i-1}-x_{mk_t+i}\|^p+\cdots\\&&+\|x_{mn_t+m-1}-x_{mk_t}\|^p\}^{1\over p}\\&&-\{d(A_1,,A_2)^p+\cdots+d(A_{i},A_{i+1})^p+\cdots+d(A_{m-1},A_m)^p\}^{1\over p}<\delta.
\end{eqnarray*}
\end{proof}
\begin{theorem}\label{FP1}
Let $X$ be a uniformly convex Banach space and let $A_{i_0}$ be a closed convex subset of $X$ for some $i_0\in\{1,\cdots,m\}$. If $d_p(A_1,\cdots,A_m)=0$, then $T$ has a unique fixed point $x\in\cap_{i=1}^mA_i$ and $x_n\to x$ as $n\to\infty$.
\end{theorem}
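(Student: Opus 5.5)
Since $d_p(A_1,\cdots,A_m)=0$, every $d(A_i,A_{i+1})$ vanishes, and \eqref{CC} reduces to
$$d_p(Tx_1,\cdots,Tx_m;Ty_1,\cdots,Ty_m)\leq d_p(x_1,\cdots,x_m;y_1,\cdots,y_m)-\varphi(d_p(x_1,\cdots,x_m;y_1,\cdots,y_m))+\varphi(0).$$
The plan is to show that the subsequence $\{x_{mn+i_0-1}\}$ lying in the closed convex set $A_{i_0}$ is Cauchy, that its limit $x$ is a fixed point lying in every $A_i$, and that the whole orbit converges to $x$. This mirrors the proof of Theorem \ref{BANACH}, with the uniform-convexity machinery replacing the geometric decay $\alpha^n$.

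\textbf{Step 1.} By Lemma \ref{V1}, for $1\le p<\infty$ we have $\|x_{mn+i-1}-x_{mn+i}\|\to 0$ for every $i$. For $p=\infty$ the sup is $0$, so by Theorem \ref{XX} all consecutive distances tend to $0$. In particular $\|x_n-x_{n+1}\|\to0$.

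\textbf{Step 2 (Cauchy; the main obstacle).} Since $A_{i_0}$ is convex, Corollary \ref{V1} (the $p=\infty$ version, or its proof via Proposition \ref{VV}, which needs convexity of only one $A_{i_0}$) gives: for each $\varepsilon>0$ there is $N$ with $\|x_{mn+i_0-1}-x_{mk+i_0}\|<\varepsilon$ whenever $n\ge k\ge N$. For $p<\infty$ I would rerun the argument of Proposition \ref{VV} and the subsequent theorem, using that $d(A_{i_0},A_{i_0+1})=0$ is the relevant distance. Uniform convexity then degenerates: with $R=\varepsilon$ one gets midpoints that are $\varepsilon$-close, which is harmless. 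The bound
$$\|x_{mn+i_0-1}-x_{mk+i_0-1}\|\le\|x_{mn+i_0-1}-x_{mk+i_0}\|+\|x_{mk+i_0}-x_{mk+i_0-1}\|$$
together with Step 1 then shows $\{x_{mn+i_0-1}\}$ is Cauchy. The delicate point is that the earlier results were formulated with convexity of the odd-indexed sets. I expect to need the remark that, in the case $d_p=0$, convexity of a single $A_{i_0}$ suffices, checking it through the contradiction argument in the proof of that theorem. There the terms $\|x_{mn_t+i}-x_{mn_t+m+i}\|$ are controlled via Step 1 and the triangle inequality relative to index $i_0$. This is where I expect the main difficulty.

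\textbf{Step 3.} By completeness and closedness of $A_{i_0}$, $x_{mn+i_0-1}\to x\in A_{i_0}$. Step 1 and the triangle inequality give $x_{mn+i_0-1+j}\to x$ for each $j$, so the whole sequence converges, $x_n\to x$. For $x\in\cap A_i$ one would want all $A_i$ closed. Failing that, it suffices that $x\in A_{i_0}$: once $Tx=x$, cyclicity gives $x=Tx\in A_{i_0+1}$, and so on around the cycle.

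\textbf{Step 4.} To show $Tx=x$, place $x$ in the $i_0$-th slot and $x_{n}$, with $x_n\in A_{i_0+1}$ along the subsequence $n\equiv i_0 \pmod m$ and the other slots filled by suitable $x_n$'s, in the $(i_0+1)$-th slot of \eqref{FP8}. This gives $\|Tx-x_{n+1}\|\le d_p(\cdots)\to0$, hence $Tx=x$. The computation is as in Theorem \ref{BANACH}, using $T$'s nonexpansiveness \eqref{FP8} in place of $\alpha$.

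\textbf{Step 5 (uniqueness).} If $Ty=y$, apply \eqref{CC} to the tuples $(x,\dots,x)$ and $(y,\dots,y)$, both admissible since fixed points lie in every $A_i$. With $D=d_p(x,\dots,x;y,\dots,y)$ this gives $D\le D-\varphi(D)+\varphi(0)$, so $\varphi(D)\le\varphi(0)$. Since $\varphi$ is strictly increasing, $D=0$, and therefore $x=y$.
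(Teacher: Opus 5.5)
Your proposal is correct and follows essentially the paper's route: the Cauchy property of $\{x_{mn+i_0-1}\}$ comes from the corollary just before Theorem~\ref{FP1} plus vanishing consecutive distances, then a limit $x\in A_{i_0}$ and uniqueness from $\varphi(D)\le\varphi(0)$, with the one deviation that you get $Tx=x$ directly from \eqref{FP8} and convergence of the whole orbit instead of through Theorem~\ref{SS1}, which is cleaner. Your Step~2 worry resolves as you suggest, and more easily than you fear: when $d_p(A_1,\cdots,A_m)=0$, Step~1 and the triangle inequality already give $\|x_j-x_{j+m}\|\to0$ for every $j$, which is the only thing Proposition~\ref{VV} and Remark~\ref{PO1} contribute to the argument behind that corollary, so neither convexity nor uniform convexity is really needed there (and do not try to rerun Proposition~\ref{VV} itself, since its admissible range for $\varepsilon$ is empty when $d(A_{i_0},A_{i_0+1})=0$).
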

\begin{proof}Let $\varepsilon>0$. By Proposition \ref{VV} or Remark \ref{PO1} there exists $N_1$ such that
$\|x_{mn+i_0-1}-x_{mn+i_0}\|<\varepsilon$ holds  true for all $n\geq N_1$. By Corollary \ref{V1} there exists $N_2$ so that $\|x_{mk+i_0-1}-x_{mn+i_0}\|<\varepsilon$  holds  true for all $k\geq N_2$. Thus if $N=\min\{N_1,N_2\}$ and $n>k\geq N$, Then
$$\|x_{mn+i_0-1}-x_{mk+i_0-1}\|\leq\|x_{mk+i_0-1}-x_{mn+i_0}\|+\|x_{mn+i_0-1}-x_{mn+i_0}\|<\varepsilon.$$Hence $\{x_{mn+i_0-1}\}$ is a Cauchy sequence in $A_{i_0}$ and so it is convergent to an element of $A_{i_0}$, $x$ say. Thus
by Theorem \ref{SS1} we have that $$d_p(x,Tx,\cdots,T^{m-1}x)=d_p(A_1,A_2,\cdots,A_m)=0$$ which means $$x=Tx=T^2x=\cdots=T^{m-1}x.$$
It follows that $x$ is a fixed point for $T$ and $x\in\cap_{i=1}^mA_i\neq\emptyset$. Now, let $y$ be another fixed point for $T$. Therefore $$\sum_{i=1}^m\|x-y\|=\sum_{i=1}^m\|Tx-Ty\|\leq\sum_{i=1}^m\|x-y\|-\varphi(\sum_{i=1}^m\|x-y\|)+\varphi(0).$$ But $\varphi$ is strictly increasing which implies that $\varphi(0)<\varphi(\sum_{i=1}^m\|x-y\|)\leq\varphi(0)$; a contradiction. We note that $\|x_{mn+i_0-1}-x_{mn+i_0}\|\to d(A_{i_0},A_{i_0+1})=0$ and $x_{mn+i_0-1}\to x$, therefore $$\|x_{mn+i_0}-x\|\leq\|x_{mn+i_0-1}-x\|+\|x_{mn+i_0}-x_{mn+i_0-1}\|\to0$$ which implies that $x_{mn+i_0}\to x$. Using the same mathod we could show that $x_{mn+i}\to x$ for all $i\in\{1,\cdots,m\}$. This ensures that $x_n\to x$ as $n\to\infty$ and we are done.
\end{proof}
\begin{theorem}\label{FP7}
Let $X$ be a uniformly convex Banach space and let $A_{i_0}$ be a closed convex subset of $X$ for $i_0\in\{1,\cdots,m\}$ with $d(A_{i_0},A_{i_0+1})=d_\infty(A_1,\cdots,A_m)$. Then $\{x_{mn+i_0-1}\}$ is a Cauchy sequence.
\end{theorem}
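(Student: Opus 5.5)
The plan is to follow the proof of the analogous result in \cite{ThSh} for two sets. The argument is by contradiction and uses uniform convexity, with two inputs established earlier. Write $D:=d(A_{i_0},A_{i_0+1})=d_\infty(A_1,\cdots,A_m)$. The first input, by Lemma \ref{V1} (part 1, as in its proof with this choice of $i_0$), is
\[
\|x_{mn+i_0-1}-x_{mn+i_0}\|\to D .
\]
The second, by Corollary \ref{V1} (part 1), is that for every $\varepsilon>0$ there is $N$ with
\[
\|x_{mn+i_0-1}-x_{mk+i_0}\|<D+\varepsilon \quad\text{for all } n\geq k\geq N .
\]
Here only the single convex set $A_{i_0}$ is needed, by the note after the preceding theorem for $p=\infty$.

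\textbf{Case $D=0$.} The two facts give the Cauchy property directly via the triangle inequality, exactly as in the proof of Theorem \ref{FP1}:
\[
\|x_{mn+i_0-1}-x_{mk+i_0-1}\|\leq \|x_{mn+i_0-1}-x_{mk+i_0}\|+\|x_{mk+i_0}-x_{mk+i_0-1}\|<2\varepsilon .
\]

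\textbf{Case $D>0$.} Suppose $\{x_{mn+i_0-1}\}$ is not Cauchy. Then there are $\varepsilon_0>0$ and indices $n_t>k_t\geq t$ with
\[
\|x_{mn_t+i_0-1}-x_{mk_t+i_0-1}\|\geq\varepsilon_0 .
\]
Choose $\gamma>0$ so that $\varepsilon_0/\gamma>D$, and then $\varepsilon>0$ with
\[
\varepsilon<\min\left\{\frac{\varepsilon_0}{\gamma}-D,\ \frac{D\,\delta(\gamma)}{1-\delta(\gamma)}\right\},
\]
just as in Proposition \ref{VV}. For $t$ large, the first fact gives $\|x_{mk_t+i_0-1}-x_{mk_t+i_0}\|\leq D+\varepsilon$, and the second (with $n=n_t\geq k=k_t$) gives $\|x_{mn_t+i_0-1}-x_{mk_t+i_0}\|<D+\varepsilon$. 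Apply the uniform convexity implication with $w=x_{mk_t+i_0}$, $R=D+\varepsilon$ and $r=\varepsilon_0$, taking $r\leq 2R$ without loss of generality by shrinking $\varepsilon_0$. This yields
\[
\left\|\frac{x_{mn_t+i_0-1}+x_{mk_t+i_0-1}}{2}-x_{mk_t+i_0}\right\|\leq\left(1-\delta\!\left(\frac{\varepsilon_0}{D+\varepsilon}\right)\right)(D+\varepsilon).
\]
Since $\varepsilon_0/(D+\varepsilon)>\gamma$ and $\delta$ is strictly increasing, the right-hand side is at most $(1-\delta(\gamma))(D+\varepsilon)$, and this is $<D$ by the choice of $\varepsilon$. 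The midpoint lies in $A_{i_0}$ by convexity and $x_{mk_t+i_0}\in A_{i_0+1}$, so the left-hand side is at least $D$, a contradiction.

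\textbf{Expected obstacle.} The main difficulty is the index bookkeeping: the common point $w$ must be the same element $x_{mk_t+i_0}$ in both distance estimates. This forces the uniform bound of Corollary \ref{V1} to be applied with the smaller index $k_t$ in the second slot. I would check carefully that the corollary's ordering $n\geq k$ matches this, and if it does not, swap the roles of $n_t$ and $k_t$, since the not-Cauchy condition is symmetric in the two indices.
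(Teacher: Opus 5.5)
Your argument is correct and is essentially the paper's proof: a contradiction via uniform convexity with the same choice of $\gamma$ and $\varepsilon$, using the convergence $\|x_{mn+i_0-1}-x_{mn+i_0}\|\to d(A_{i_0},A_{i_0+1})$ (from Theorem~\ref{XX}) and the eventual cross-distance bound of Corollary~\ref{V1}, with the degenerate case handled by the triangle-inequality argument of Theorem~\ref{FP1}. The only difference is that you center the uniform-convexity estimate at $x_{mk_t+i_0}$ rather than at the paper's $x_{mn_t+i_0}$. This is harmless and in fact uses Corollary~\ref{V1} in exactly the ordering $n\geq k$ in which it is stated, whereas the paper's choice needs the reversed ordering, which is available from the symmetric ``for all $n,k\geq N_0$'' of the preceding theorem.
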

\begin{proof}With a slight modifying the proof of \cite[Theorem 7]{ThSh}, we can prove this theorem. For the sake of completeness of the discussion, we state the proof.  The case of $d_p(A_1,\cdots,A_m)=0$ has just discussed in Theorem \ref{FP1}, so we assume that $d_p(A_1,\cdots,A_m)\neq0.$ Let $\{x_{mn+i_0-1}\}$ is not  Cauchy. Thus there exists $\varepsilon_0>0$ so that for each positive integer $t$ there exist $n_t\geq k_t\geq t$ such that
\begin{eqnarray}\label{FP4}\|x_{mn_t+m-1}-x_{mk_t+m-1}\|\geq\varepsilon_0.\end{eqnarray} Pick a $0<\gamma<1$ so that ${{\varepsilon_0}\over{\gamma}}>d(A_{i_0},A_{i_0+1}),$ and let $\varepsilon>0$ be chosen with
\begin{eqnarray*}
0<\varepsilon<\min\left\{{{\varepsilon_0}\over{\gamma}}-d(A_{i_0},A_{i_0+1}),{{d(A_{i_0},A_{i_0+1})\delta(\gamma)}\over{1-\delta(\gamma)}}\right\}.
\end{eqnarray*}
By Theorem \ref{XX}, there exists $N_1$ so that
\begin{eqnarray}\label{FP5}\|x_{mn_t+i-1}-x_{mn_t+i}\|<d(A_{i_0},A_{i_0+1})+\varepsilon,\end{eqnarray}
for all $n_t\geq N_1$ and by Corollary \ref{V1}, there exists $N_2$ so that
\begin{eqnarray}\label{FP6}\|x_{mk_t+i-1}-x_{mn_t+i}\|<d(A_{i_0},A_{i_0+1})+\varepsilon\end{eqnarray}
for all $n_t\geq k_t\geq N_2$. Now, let $n_t\geq k_t\geq N=\max\{N_1,N_2\}$. The uniform convexity of $X$ and \eqref{FP4},\eqref{FP5} and \eqref{FP6} bring us to
\begin{eqnarray*}
\left\|{{x_{mn_t+i_0-1}+x_{mk_t+i_0-1} }\over{2}}-x_{mn_t+i_0}\right\|&\leq&\left(1-\delta\left({{\varepsilon_0}\over{d(A_{i_0},A_{i_0+1})+\varepsilon}}\right)\right)\times\\&&(d(A_{i_0},A_{i_0+1})+\varepsilon).
\end{eqnarray*}According to the choice of $\varepsilon$ and the fact that $\delta$ is strictly increasing we can deduce that
\begin{eqnarray*}
\left\|{{x_{mn_t+i_0-1}+x_{mk_t+i_0-1} }\over{2}}-x_{mn_t+i_0}\right\|< d(A_{i_0},A_{i_0+1})
\end{eqnarray*} for $n_t\geq k_t\geq N$ which is a contradiction due to the definition of $d(A_{i_0},A_{i_0+1})$. Hence $\{x_{mn+i_0-1}\}$ is a Cauchy sequence.
\end{proof}
\begin{theorem}Let $X$ be a uniformly convex Banach space and let $A_{i_0}$ be a closed convex subset of $X$ for $i_0\in\{1,\cdots,m\}$ with $d(A_{i_0},A_{i_0+1})=d_\infty(A_1,\cdots,A_m)$. Then there exists a unique $x\in A_{i_0}$ so that $x_{mn+i_0-1}\to x$, $T^{m}x=x$ and $$d_p(x,Tx,\cdots,T^{m-1}x)=d_p(A_1,A_2,\cdots,A_m).$$
\end{theorem}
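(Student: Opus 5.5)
The plan is to combine Theorem \ref{FP7} with the argument of Theorem \ref{SS1}, and then add a uniqueness argument.

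\emph{Existence of the limit.} By Theorem \ref{FP7}, the sequence $\{x_{mn+i_0-1}\}$ is Cauchy. Since $X$ is a Banach space and $A_{i_0}$ is closed, it converges to some $x\in A_{i_0}$. Uniqueness of this limit is automatic, since limits in a metric space are unique. So the content of the statement lies in the two identities for $x$.

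\emph{The best proximity identity.} Apply Theorem \ref{SS1} in the form of the Remark following it, with the index shifted to $i_0$. The subsequence $\{x_{mn+i_0-1}\}$ itself converges in $A_{i_0}$, so the telescoping estimates of that proof give
$$d_p(x,Tx,\cdots,T^{m-1}x)=d_p(A_1,\cdots,A_m).$$
Concretely, one replaces the points of the orbit one at a time by $x,Tx,\dots,T^{m-1}x$. At each stage the estimate \eqref{FP8} and Theorem \ref{XX} are used to show that the $d_p$-value still tends to $d_p(A_1,\cdots,A_m)$. The reindexing is routine.

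\emph{Showing $T^mx=x$.} This is the step I expect to be the main obstacle. The point $T^mx$ lies in $A_{i_0}$. I would first show that the chain $(T^mx,Tx,\dots,T^{m-1}x)$ realizes $d_p(A_1,\cdots,A_m)$ as well. For this, apply \eqref{CC} $m$ times to the pair of chains starting at $x$ and at $T^{m-1}x$ in the appropriate shifted positions; each application is nonexpansive by \eqref{FP8}. This yields $\|T^mx-Tx\|\le$ the corresponding distance for $x$, and similarly for the other term. Since both quantities are also bounded below by the set distances, equality follows.

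Next, consider the midpoint $z=(x+T^mx)/2$, which lies in $A_{i_0}$ by convexity. The two points $x$ and $T^mx$ are both at distance $d(A_{i_0},A_{i_0+1})=d_\infty(A_1,\cdots,A_m)$ from $Tx\in A_{i_0+1}$. If $x\neq T^mx$, strict (indeed uniform) convexity gives $\|z-Tx\|<d(A_{i_0},A_{i_0+1})$. This contradicts the definition of $d(A_{i_0},A_{i_0+1})$, and it is the same contradiction mechanism used in Proposition \ref{VV} and Theorem \ref{FP7}. The case $d_p(A_1,\cdots,A_m)=0$ needs separate handling, since uniform convexity degenerates there; it is already covered by Theorem \ref{FP1}.

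An alternative route to $T^mx=x$ avoids the equality chase. Note that $x_{mn+m+i_0-1}=T^m x_{mn+i_0-1}$ converges to $x$ as well, since the subsequence is Cauchy by Theorem \ref{FP7}. One then shows $\|T^mx_{mn+i_0-1}-T^mx\|\to$ a quantity controlled by $\|x_{mn+i_0-1}-x\|\to0$. This uses \eqref{CC} with all but one slot filled by orbit points, passing to the limit via Lemma \ref{V1}, and concludes $T^mx=\lim x_{mn+m+i_0-1}=x$. I would try the midpoint argument first. If the bookkeeping of which $d_p$-slot pairs $x$ with $Tx$ becomes messy for $p<\infty$, I would fall back on this alternative.
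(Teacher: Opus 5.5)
\textbf{What matches the paper.} Your treatment of the limit, of the identity $d_p(x,Tx,\cdots,T^{m-1}x)=d_p(A_1,\cdots,A_m)$, and of $T^mx=x$ is the paper's route. You use Theorem \ref{FP7} plus closedness for the limit, Theorem \ref{SS1} shifted to $i_0$ for the identity, and then the chain $d_p(A_1,\cdots,A_m)\le d_p(T^mx,Tx,\cdots,T^{m-1}x)\le d_p(x,Tx,\cdots,T^{m-1}x)=d_p(A_1,\cdots,A_m)$ followed by the midpoint contradiction.

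The middle inequality needs only one application of \eqref{FP8} to $(x,Tx,\cdots,T^{m-1}x)$, plus invariance of $d_p$ under cyclic rotation of the chain. It compares aggregate quantities only. The equalities $\|T^mx-Tx\|=\|x-Tx\|=d(A_{i_0},A_{i_0+1})$ then come from the slotwise lower bounds. For $p=\infty$, this is exactly where the hypothesis $d(A_{i_0},A_{i_0+1})=d_\infty(A_1,\cdots,A_m)$ is used.

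Drop your fallback route. Every term in \eqref{CC} pairs a point of $A_k$ with a point of $A_{k+1}$, so nothing controls $\|T^mu-T^mv\|$ for $u,v$ in the same set $A_{i_0}$. Also, $T$ need not be continuous.

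\textbf{The gap: uniqueness.} You reduce uniqueness to uniqueness of limits. Read literally, that is true, but it makes the word vacuous. What the paper's proof establishes, and what the result is meant to generalize (uniqueness of the best proximity point), is stronger: $x$ is the only $y\in A_{i_0}$ with $T^my=y$ and $d_p(y,Ty,\cdots,T^{m-1}y)=d_p(A_1,\cdots,A_m)$. In particular, the limit does not depend on $x_0$. Proving this requires comparing two different orbits, which your proposal never does. The paper makes this comparison through \eqref{FP8}, \eqref{CC} and the strict monotonicity of $\varphi$, and finishes with the midpoint argument.

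Here is a clean version of that comparison. Put $D=d_p(x,Tx,\cdots,T^{m-1}x;y,Ty,\cdots,T^{m-1}y)$.
\begin{itemize}
\item Since $T^mx=x$ and $T^my=y$, applying $T$ rotates both chains by the same step. So the left side of \eqref{CC} is again $D$, which gives $\varphi(D)\le\varphi(d_p(A_1,\cdots,A_m))$ and hence $D\le d_p(A_1,\cdots,A_m)$.
\item The reverse inequality holds because $T^jx$ and $T^{j+1}y$ lie in consecutive sets. So $D=d_p(A_1,\cdots,A_m)$.
\item Reading off the $i_0$-slot gives $\|x-Ty\|=d(A_{i_0},A_{i_0+1})=\|y-Ty\|$.
\item If $x\neq y$, strict convexity places $\frac{x+y}{2}\in A_{i_0}$ strictly closer than $d(A_{i_0},A_{i_0+1})$ to $Ty\in A_{i_0+1}$, a contradiction. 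If that distance is $0$, one gets $x=Ty=y$ directly.
\end{itemize}
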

\begin{proof}
By Theorem \ref{FP7}, $\{x_{mn+i_0-1}\}$ is a Cauchy sequence in $A_{i_0}$, so it is convergent to a point of $A_{i_0}$, $x$ say. By Theorem \ref{SS1} $$d_p(x,Tx,\cdots,T^{m-1}x)=d_p(A_1,A_2,\cdots,A_m).$$ To see that $T^mx=x$, we note that
\begin{eqnarray*}d_p(A_1,A_2,\cdots,A_m)&\leq& d_p(T^mx,Tx,\cdots,T^{m-1}x)\\&\leq&d_p(T^{m-1}x,x,\cdots,T^{m-2}x)\\&=&d_p(A_1,A_2,\cdots,A_m).\end{eqnarray*}Therefore $d_p(T^mx,Tx,\cdots,T^{m-1}x)=d_p(A_1,A_2,\cdots,A_m)$. This implies that $\|x-Tx\|=\|T^mx-Tx\|=d(A_{i_0},A_{i_0+1}).$ Because if these are not true, for instance assume that $\|x-Tx\|\neq d(A_{i_0},A_{i_0+1}),$ then there exists an $\varepsilon_0>0$ so that $\|x-Tx\|\geq d(A_{i_0},A_{i_0+1})+\varepsilon_0$. Hence
\begin{eqnarray*}
0<\delta&<&\{d(A_1,,A_2)^p+\cdots+(d(A_{i_0},A_{i_0+1})+\varepsilon_0)^p+\cdots+d(A_{m-1},A_m)^p\}^{1\over p}\\&&-\{d(A_1,,A_2)^p+\cdots+d(A_{i},A_{i+1})^p+\cdots+d(A_{m-1},A_m)^p\}^{1\over p}\\
&\leq&\{\|x-Tx\|^p+\cdots+\|T^{m-1}x-x\|^p\}^{1\over p}\\&&-\{d(A_1,,A_2)^p+\cdots+d(A_{i},A_{i+1})^p+\cdots+d(A_{m-1},A_m)^p\}^{1\over p}=0
\end{eqnarray*} which is impossible, thus $\|x-Tx\|=d(A_{i_0},A_{i_0+1}).$ Similarly $\|T^mx-Tx\|=d(A_{i_0},A_{i_0+1}).$ Now, let $T^mx\neq x$. Thus using the strict convexity of $X$, we have that
$$0<\left\|{{T^mx+x}\over{2}}-Tx\right\|<d(A_{i_0},A_{i_0+1}),$$ that is a contradiction. Because $A_{i_0}$ is convex which implies that ${{T^mx+x}\over{2}}\in A_{i_0}$. This ensures $T^mx=x$. Finally, we are going to show the uniqueness of $x$. For, assume that there exists another $y\in A_{i_0}$, so that $T^my=y$ and
$$d_p(y,Ty,\cdots,T^{m-1}y)=d_p(A_1,A_2,\cdots,A_m).$$This implies that $\|y-Ty\|=d(A_{i_0},A_{i_0+1}).$
By \eqref{FP8}, we have that
\begin{eqnarray*}
d_p(y,Tx,T^2x,\cdots,T^{m-1}x;y,Tx,T^2y,T^3y,\cdots,T^{m-1}y)\leq\qquad\qquad\qquad\\ d_p(T^{m-1}y,x,Tx,\cdots,T^{m-2}x;T^{m-1}y,x,Ty,\cdots,T^{m-2}y)\leq\qquad\qquad\\ d_p(y,Tx,T^2x,\cdots,T^{m-1}x;y,Tx,T^2y,T^3y,\cdots,T^{m-1}y).\;\;\;\;\;\;\;
\end{eqnarray*}
Hence
\begin{eqnarray*}
d_p(y,Tx,T^2x,\cdots,T^{m-1}x;y,Tx,T^2y,T^3y,\cdots,T^{m-1}y)=\qquad\qquad\qquad\qquad\\ d_p(T^{m-1}y,x,Tx,\cdots,T^{m-2}x;T^{m-1}y,x,Ty,\cdots,T^{m-2}y).
\end{eqnarray*}
Now if $$d_p(y,Tx,T^2x,\cdots,T^{m-1}x;y,Tx,T^2y,T^3y,\cdots,T^{m-1}y)>d_p(A_1,\cdots,A_m),$$ then since $T$ is a $p$-$\varphi$-contraction with a strictly increasing function $\varphi$, we have that
\begin{eqnarray*}
d_p(y,Tx,T^2x,\cdots,T^{m-1}x;y,Tx,T^2y,T^3y,\cdots,T^{m-1}y)\leq\qquad\qquad\qquad\qquad\\ d_p(T^{m-1}y,x,Tx,\cdots,T^{m-2}x;T^{m-1}y,x,Ty,\cdots,T^{m-2}y)-\qquad\qquad\\\varphi(d_p(T^{m-1}y,x,Tx,\cdots,T^{m-2}x;T^{m-1}y,x,Ty,\cdots,T^{m-2}y))+\varphi(d_p(A_1,\cdots,A_m))<\\ d_p(T^{m-1}y,x,Tx,\cdots,T^{m-2}x;T^{m-1}y,x,Ty,\cdots,T^{m-2}y)-\qquad\qquad\\\varphi(d_p(A_1,\cdots,A_m))+\varphi(d_p(A_1,\cdots,A_m))=\\d_p(y,Tx,T^2x,\cdots,T^{m-1}x;y,Tx,T^2y,T^3y,\cdots,T^{m-1}y).\qquad\qquad
\end{eqnarray*}This is impossible so $$d_p(y,Tx,T^2x,\cdots,T^{m-1}x;y,Tx,T^2y,T^3y,\cdots,T^{m-1}y)=d_p(A_1,\cdots,A_m),$$ which implies that
$\|x-Ty\|=\|Tx-y\|=d(A_{i_0},A_{i_0+1})$. But it follows from the convexity of $A_{i_0}$ and the strictly convexity of $X$ that
$$0<\left\|{{y+x}\over{2}}-Tx\right\|=\left\|{{x-Ty}\over{2}}+{{y-Tx}\over{2}}\right\|<d(A_{i_0},A_{i_0+1}),$$ a contradiction. Thus $x=y$.
\end{proof}
\section{Reflexive Banach spaces}
In this section we study the $p$-$\alpha$-contraction mappings in reflexive Banach spaces. The method of this section is an improvement of that of \cite[Section 3]{ThSh}. In the rest of the paper, we assume that $X$ is a reflexive Banach space, $A_i$s are non-empty subsets of $X$, and $T:\cup_{i=1}^mA_i\to\cup_{i=1}^mA_i$ is a cyclic $p$-$\alpha$-contraction, for some positive real number $\alpha$ with $0<\alpha^m<{1\over{2^{1\over p}}}$.
\begin{theorem}\label{REFL}
Suppose $A_i$s are weakly closed subsets of $X$. Then, there exists $(\hat{x}_1,\hat{x}_2,\cdots,\hat{x}_m)\in A_1\times A_2\times\cdots\times A_m$ such that
$$d_p(\hat{x}_1,\hat{x}_2,\cdots,\hat{x}_m)=d_p(A_1,\cdots,A_m).$$
\end{theorem}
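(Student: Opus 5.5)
Starting from the iterates $x_0\in A_1$, $x_n=Tx_{n-1}$, I would use Theorem~\ref{MET1}: since $T$ is a cyclic $p$-$\alpha$-contraction with $0<\alpha^m<2^{-1/p}$, each of the sequences $\{x_{mn+i-1}\}_{n\geq0}$, $i=1,\dots,m$, is bounded. Since $X$ is reflexive, bounded sequences have weakly convergent subsequences. Passing to subsequences $m$ times in succession, I obtain a single increasing sequence $\{n_j\}$ such that $x_{mn_j+i-1}\rightharpoonup \hat{x}_i$ weakly for every $i=1,\dots,m$. Each $A_i$ is weakly closed and $x_{mn_j+i-1}\in A_i$, so $\hat{x}_i\in A_i$. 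Hence $(\hat{x}_1,\dots,\hat{x}_m)\in A_1\times\cdots\times A_m$.

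Next I would use the weak lower semicontinuity of the norm. For each $i$ (with indices cyclic, $\hat{x}_{m+1}=\hat{x}_1$ and the term $x_{mn_j+m}$ replaced by $x_{mn_j}$ to match the definition of $d_p(x_{mn},\dots,x_{mn+m-1})$), we have $x_{mn_j+i-1}-x_{mn_j+i}\rightharpoonup \hat{x}_i-\hat{x}_{i+1}$. Therefore
$$\|\hat{x}_i-\hat{x}_{i+1}\|\leq\liminf_{j\to\infty}\|x_{mn_j+i-1}-x_{mn_j+i}\|.$$
Equivalently, the map $(y_1,\dots,y_m)\mapsto\big(\sum\|y_i-y_{i+1}\|^p\big)^{1/p}$, or the maximum when $p=\infty$, is convex and norm-continuous on $X^m$, hence weakly lower semicontinuous on the product. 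Applying this gives
$$d_p(\hat{x}_1,\dots,\hat{x}_m)\leq\liminf_{j\to\infty} d_p(x_{mn_j},x_{mn_j+1},\dots,x_{mn_j+m-1}).$$
By Theorem~\ref{XX}, the right-hand side equals $d_p(A_1,\dots,A_m)$.

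The reverse inequality $d_p(\hat{x}_1,\dots,\hat{x}_m)\geq d_p(A_1,\dots,A_m)$ is immediate, since $\|\hat{x}_i-\hat{x}_{i+1}\|\geq d(A_i,A_{i+1})$ for each $i$ and the $\ell^p$ (or max) aggregation is monotone. Combining the two inequalities gives equality.

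The main obstacle is the lower semicontinuity step. For $p=\infty$, and for $p<\infty$ with the $\liminf$ taken inside a $p$-th power sum, I would argue through convexity and continuity on the product space $X^m$, which is reflexive with the $\ell^p$-sum norm, so Mazur's theorem applies. Alternatively, one can pass to a further subsequence along which each $\|x_{mn_j+i-1}-x_{mn_j+i}\|$ converges, and then apply the coordinatewise inequality together with the monotonicity of the $\ell^p$ aggregation. A secondary point is keeping one common subsequence for all $m$ coordinates, which the diagonal extraction above handles.
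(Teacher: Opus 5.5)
Your proposal is correct and follows essentially the same route as the paper. Both arguments get boundedness from Theorem~\ref{MET1}, extract a common weakly convergent subsequence by reflexivity, use weak closedness to place $\hat{x}_i$ in $A_i$, and then apply weak lower semicontinuity of the norm, which the paper proves by hand with norming functionals $f_i$. The one difference is the comparison limit: you use the aggregated limit from Theorem~\ref{XX}, whereas the paper uses the coordinatewise convergence $\|x_{mn+i-1}-x_{mn+i}\|\to d(A_i,A_{i+1})$. Your version covers $p=\infty$, where the coordinatewise lemma only controls one index, and the case $d_p(A_1,\cdots,A_m)=0$ without separate treatment; your within-block term $x_{mn_j+m-1}-x_{mn_j}$ is also the right one to pair with $\hat{x}_m-\hat{x}_1$.
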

\begin{proof}In the case when $d_p(A_1,\cdots,A_m)=0$ the conclusion is obtained from Theorem \ref{BANACH}. So assume that $d_p(A_1,\cdots,A_m)\neq0$. By Theorem \ref{MET1} sequences $\{x_{mn+i-1}\}$s are all bounded. Thus these sequences have weakly convergent subsequences. Therefore we could fined the sequence $\{n_k\}$ of positive integers so that $\{x_{mn_k+i-1}\}$s are all convergent. Assume that $x_{mn_k+i-1}\xrightarrow{w}\hat{x}_i$, with each $\hat{x}_i$ in $A_i$, where $\xrightarrow{w}$ stands for weakly convergence. Assume that $1\leq p<\infty$. It follows that $x_{mn_k+i-1}-x_{mn_k+i}\xrightarrow{w}\hat{x}_i-\hat{x}_{i+1}\neq0$. This means that there exist real bounded linear functionals $f_i$ so that $\|f_i\|=1$ and $f_i(\hat{x}_i-\hat{x}_{i+1})=\|\hat{x}_i-\hat{x}_{i+1}\|.$ Note that $\hat{x}_{m+1}=\hat{x}_{1}$. $$\lim f_i(x_{mn_k+i-1}-x_{mn_k+i})=f_i(\hat{x}_i-\hat{x}_{i+1})=\|\hat{x}_i-\hat{x}_{i+1}\|.$$ On the other hand $$|f_i(x_{mn_k+i-1}-x_{mn_k+i})|\leq \|x_{mn_k+i-1}-x_{mn_k+i}\|.$$ Hence by Lemma \ref{V1} we have that $$\|\hat{x}_i-\hat{x}_{i+1}\|=\lim f_i(x_{mn_k+i-1}-x_{mn_k+i})\leq \lim\|x_{mn_k+i-1}-x_{mn_k+i}\|=d(A_i,A_{i+1}).$$ Thus $\|\hat{x}_i-\hat{x}_{i+1}\|=d(A_i,A_{i+1})$ for $i=1,\cdots,m$ and this means $$d_p(\hat{x}_1,\hat{x}_2,\cdots,\hat{x}_m)=d_p(A_1,\cdots,A_m).$$ For the case when $p=\infty$ the proof is the same.
\end{proof}
\begin{theorem}\label{REFL3}
Let $A_{i_0}$ is weakly closed for some $i_0\in\{1,\cdots,m\}$. Then there exists $x\in A_{i_0}$ so that $d_p(x,Tx,\cdots,T^{m-1}x)=d_p(A_1,\cdots,A_m)$ provided that $T$ is wakly continuous on $A_{i_0}$.
\end{theorem}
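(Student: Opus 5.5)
We will show that the weak limit of a subsequence of $\{x_{mn+i_0-1}\}$ does the job. The argument combines the boundedness from Theorem \ref{MET1}, the convergence from Theorem \ref{XX}, and weak lower semicontinuity of the norm; weak continuity of $T$ on $A_{i_0}$ carries the weak convergence around the cycle. The cycle is indexed as $A_{i_0},A_{i_0+1},\dots$, which changes nothing by the cyclic symmetry of $d_p$.

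\emph{Step 1 (weak limit point).} By Theorem \ref{MET1}, the sequence $\{x_{mn+i_0-1}\}$ is bounded. Since $X$ is reflexive, it has a weakly convergent subsequence $x_{mn_k+i_0-1}\xrightarrow{w}x$. Because $A_{i_0}$ is weakly closed, $x\in A_{i_0}$.

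\emph{Step 2 (push the weak convergence along the orbit).} Weak continuity of $T$ on $A_{i_0}$ gives $x_{mn_k+i_0}=Tx_{mn_k+i_0-1}\xrightarrow{w}Tx$. To get $x_{mn_k+i_0+j-1}\xrightarrow{w}T^{j-1}x$ for every $j=1,\dots,m$, we would need weak continuity on the other $A_i$ as well, and only continuity on $A_{i_0}$ is assumed. I expect this to be the main obstacle.

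\emph{Step 3 (avoiding the remaining iterates).} To get around Step 2, I would use only the single step $A_{i_0}\to A_{i_0+1}$ and then argue as in the proof of Theorem \ref{SS1}. Along the same subsequence, Theorem \ref{MET1} and reflexivity let us also extract (after further thinning) weak limits $x_{mn_k+i_0+j-1}\xrightarrow{w}\hat x_{i_0+j}$, with $\hat x_{i_0}=x$ and $\hat x_{i_0+1}=Tx$. The tuple $(\hat x_i)$ then satisfies $\|\hat x_i-\hat x_{i+1}\|\le\liminf\|x_{mn_k+i-1}-x_{mn_k+i}\|$. By Lemma \ref{V1} (for $p<\infty$), or Theorem \ref{XX} together with the $d_\infty$ bound (for $p=\infty$), this gives $d_p(\hat x_{i_0},Tx,\hat x_{i_0+2},\dots)\le d_p(A_1,\dots,A_m)$, hence equality.

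Next apply the cyclic contraction inequality (\ref{FP8}) repeatedly. This replaces $(\hat x_{i_0},Tx,\hat x_{i_0+2},\dots,\hat x_{i_0+m-1})$ first by a tuple containing $x,Tx,T^2x$, and so on inductively, exactly as in the "easy induction" of Theorem \ref{SS1}. Each replacement keeps the $d_p$-value squeezed between $d_p(A_1,\dots,A_m)$ and a quantity equal to $d_p(A_1,\dots,A_m)$.

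Concretely, at stage $j$ we have a best-proximity tuple $(x,Tx,\dots,T^{j}x,\hat x_{i_0+j+1},\dots,\hat x_{i_0+m-1})$. We apply (\ref{FP8}) with $x_\ell$ equal to $y_\ell$ given by this tuple shifted by one position: the second tuple is obtained by applying $T$ to every entry. This shows that $(T\hat x_{i_0+m-1},Tx,\dots,T^{j+1}x,T\hat x_{i_0+j+1},\dots)$ is also best proximity. Then the triangle inequality and Lemma \ref{V1}-type squeezing, as in Theorem \ref{SS1}, give the next stage.

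After $m-1$ stages we obtain $d_p(x,Tx,\dots,T^{m-1}x)=d_p(A_1,\dots,A_m)$. The delicate point, and where I would spend the most care, is the bookkeeping in these stages: the first entry must return to $x$ itself rather than to a shifted image. If this turns out to be unmanageable, the fallback is to read the hypothesis as weak continuity of $T$ on every $A_i$ (which is likely what is intended), in which case Step 2 finishes the proof directly via the lower semicontinuity argument of Theorem \ref{REFL}.
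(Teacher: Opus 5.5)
You are right to doubt Step 2, and that is exactly the point the paper's proof passes over. The paper takes $x_{mn_k+i_0-1}\to x$ weakly, asserts that since $T$ is weakly continuous on $A_{i_0}$ ``so are $T^j$'', and finishes as in Theorem \ref{REFL}; this is your fallback. That assertion needs weak continuity of $T$ on $A_{i_0+1},\dots,A_{i_0+m-2}$ as well. So your fallback, like the paper's proof, gives the result only under that stronger hypothesis.

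Your Step 3 does not close the gap, for two concrete reasons.

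First, for $j\ge 2$ the weak limits $\hat x_{i_0+j}$ need not lie in $A_{i_0+j}$, because only $A_{i_0}$ is weakly closed. So the tuple $(x,Tx,\hat x_{i_0+2},\dots)$ is not known to satisfy $d_p\ge d_p(A_1,\dots,A_m)$, and your ``hence equality'' fails. Worse, $T$ and \eqref{FP8} cannot be applied to that tuple at all.

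Second, even granting membership, your induction only ever produces edges $(T^jx,T^{j+1}x)$. After applying \eqref{FP8}, the $A_{i_0}$-slot holds $T\hat x_{i_0+m-1}$, not $x$. The squeeze of Theorem \ref{SS1} that would put $x$ back relies on $d(x,x_{mn})\to 0$, which weak convergence does not give. So the closing edge $\|T^{m-1}x-x\|=d(A_{i_0+m-1},A_{i_0})$ is never reached. This is the ``delicate point'' you flag, and it is a genuine obstruction, not bookkeeping.

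The missing idea is to control a single pair.

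\emph{One-pair estimate.} The $m$ pairs $(x_l,y_{l+1})$ in \eqref{CC} involve disjoint variables. For $(u,v)\in A_i\times A_{i+1}$, put $(x_i,y_{i+1})=(u,v)$, fill every other slot $l$ with the consecutive orbit pair $(x_{mN+l-1},x_{mN+l})$, and let $N\to\infty$ using Theorem \ref{XX}. For $p<\infty$ this gives $E_{i+1}(\|Tu-Tv\|)\le c\,E_i(\|u-v\|)$. Here $c<1$ is the contraction constant and $E_i(t)=\bigl(d_p(A_1,\dots,A_m)^p-d(A_i,A_{i+1})^p+t^p\bigr)^{1/p}-d_p(A_1,\dots,A_m)$. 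This $E_i$ is continuous, strictly increasing, vanishes at $t=d(A_i,A_{i+1})$, and satisfies $E_i(t)\le t$. For $p=\infty$, use $(t-d_\infty(A_1,\dots,A_m))^+$ instead.

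\emph{Non-closing edges.} Your Step 2 together with weak lower semicontinuity legitimately gives $\|x-Tx\|=d(A_{i_0},A_{i_0+1})$. The one-pair estimate then propagates this to every non-closing edge of $(x,Tx,\dots,T^{m-1}x)$, without any weak limits in the other sets.

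\emph{Closing edge.} Take $i_0=1$ and use weak lower semicontinuity in one variable only: $\|T^{m-1}x-x\|\le\liminf_k\|T^{m-1}x-x_{mn_k}\|$. The pair $(T^{m-1}x,x_{mn_k})$ is the image under $T^{m-1}$ of $(x,x_{m(n_k-1)+1})$. The excess of that pair is at most $\liminf_l E_1(\|x_{mn_l}-x_{m(n_k-1)+1}\|)$. In turn, $(x_{mn_l},x_{m(n_k-1)+1})$ is the image under $T^{m(n_k-1)+1}$ of $(x_{m(n_l-n_k+1)-1},x_0)$. Since the orbit is bounded by some $R$ (Theorem \ref{MET1}), this excess is at most $c^{m(n_k-1)+1}R$. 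Hence the closing edge has excess at most $\lim_k c^{mn_k}R=0$, which completes the proof under the stated hypothesis.
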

\begin{proof}In the case when $d_p(A_1,\cdots,A_m)=0$ the result is obtained from Theorem \ref{BANACH}. So assume that $d_p(A_1,\cdots,A_m)\neq0$. By Theorem \ref{MET1} sequence $\{x_{mn+i_0-1}\}$ is bounded. Thus it has a weakly convergent subsequence $\{x_{mn_k+i_0-1}\}$ weakly converging to an element of $A_{i_0}$, $x$ say.
$T$ is wakly continuous on $A_{i_0}$, so are $T^j$. This means $x_{mn_k+i_0+j-1}\xrightarrow{w}T^jx$ as $k\to\infty$ for $j\in\{1,\cdots,m\}$. Thus  $x_{mn_k+i_0+j-1}-x_{mn_k+i_0+j}\xrightarrow{w}T^jx-T^{j+1}x\neq0$. Using the same method as Theorem \ref{REFL} we could show that $d_p(x,Tx,\cdots,T^{m-1}x)=d_p(A_1,\cdots,A_m)$ as we please.
\end{proof}
\begin{theorem}\label{REFL5}
 Let $A_i$s be closed and convex subsets of a reflexive and strictly convex Banach space $X$ and let $(A_i-A_i)\cap(A_{i+1}-A_{i+1})=\{0\}$, for all $i\in\{1,\cdots,m\}$. Then, there exists a unique $x$ in $A_1$, so that $T^mx=x$ and $$d_p(x,Tx,T^2x,\cdots,T^{m-1}x)=d_p(A_1,\cdots,A_{m}).$$
\end{theorem}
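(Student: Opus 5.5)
Closed convex subsets of a reflexive space are weakly closed, so Theorem \ref{REFL} applies and gives a tuple $(\hat{x}_1,\dots,\hat{x}_m)\in A_1\times\cdots\times A_m$ with $d_p(\hat{x}_1,\dots,\hat{x}_m)=d_p(A_1,\dots,A_m)$. The plan is to show that this tuple is in fact the orbit of $\hat{x}_1$, i.e. $\hat{x}_{i+1}=T\hat{x}_i$ for $i=1,\dots,m$ (with $\hat{x}_{m+1}=\hat{x}_1$). This gives $T^m\hat{x}_1=\hat{x}_1$ and the required equality of distances. The case $d_p(A_1,\dots,A_m)=0$ is handled by Theorem \ref{BANACH} or Theorem \ref{FP1}.

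First I prove a uniqueness lemma for best proximity pairs. Each coordinate satisfies $\|\hat{x}_i-\hat{x}_{i+1}\|\ge d(A_i,A_{i+1})$, and the $\ell^p$ (or max) aggregate equals $d_p(A_1,\dots,A_m)$. As in the argument after Theorem \ref{FP7}, this forces $\|\hat{x}_i-\hat{x}_{i+1}\|=d(A_i,A_{i+1})$ for every $i$ when $p<\infty$. Now suppose $(u,v),(u',v')\in A_i\times A_{i+1}$ both realize $d(A_i,A_{i+1})$. Then $\frac{u+u'}{2}\in A_i$ and $\frac{v+v'}{2}\in A_{i+1}$ by convexity. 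Hence $(u-v)$ and $(u'-v')$ have norm at most $d(A_i,A_{i+1})$, while their midpoint has norm at least $d(A_i,A_{i+1})$. Strict convexity then forces $u-v=u'-v'$, so $u-u'=v-v'\in(A_i-A_i)\cap(A_{i+1}-A_{i+1})=\{0\}$. Thus the proximal pair of $(A_i,A_{i+1})$ is unique, and so the optimal tuple is unique.

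Next I apply $T$ to the optimal tuple. Setting $y_i=T\hat{x}_{i-1}\in A_i$ (indices mod $m$), I use \eqref{FP8} to get
$$d_p(A_1,\dots,A_m)\le d_p(y_1,\dots,y_m)\le d_p(\hat{x}_m,\hat{x}_1,\dots,\hat{x}_{m-1};\hat{x}_m,\hat{x}_1,\dots,\hat{x}_{m-1}).$$
The main obstacle is that the right-hand side is the shifted chain, so its $i$-th term compares $\hat{x}_{i-1}$ with $\hat{x}_i$ while landing in the slot belonging to $(A_i,A_{i+1})$. One has to check that the indexing convention of \eqref{CC} matches the tuple $(\hat{x}_m,\hat{x}_1,\dots)$ sitting in $(A_1,A_2,\dots)$. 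Here I would instead pick the input tuple $x_i=y_i=\hat{x}_i$ and read off $d_p(T\hat{x}_1,\dots,T\hat{x}_m)\le d_p(\hat{x}_1,\dots,\hat{x}_m)=d_p(A_1,\dots,A_m)$. This shows $(T\hat{x}_m,T\hat{x}_1,\dots,T\hat{x}_{m-1})\in A_1\times\cdots\times A_m$ is again optimal. By the uniqueness of the optimal tuple, $T\hat{x}_{i}=\hat{x}_{i+1}$ for every $i$. Consequently $T^m\hat{x}_1=\hat{x}_1$, and $d_p(\hat{x}_1,T\hat{x}_1,\dots,T^{m-1}\hat{x}_1)=d_p(A_1,\dots,A_m)$.

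Finally, for uniqueness, let $y\in A_1$ satisfy $T^my=y$ and $d_p(y,Ty,\dots,T^{m-1}y)=d_p(A_1,\dots,A_m)$. Then $(y,Ty,\dots,T^{m-1}y)$ is an optimal tuple, so it coincides with $(\hat{x}_1,\dots,\hat{x}_m)$, giving $y=\hat{x}_1$. For $p=\infty$ only the coordinates attaining the maximum are forced to equal the gap. I expect to handle this either by running the same midpoint argument on the max-norm directly, or by restricting to $1\le p<\infty$ as in the earlier sections. This is the secondary difficulty.
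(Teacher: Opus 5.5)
For $1\le p<\infty$ your argument is correct and follows the paper's proof. Theorem \ref{REFL} gives an optimal tuple. Strict convexity together with $(A_i-A_i)\cap(A_{i+1}-A_{i+1})=\{0\}$ makes that tuple unique. Then \eqref{FP8} with $x_i=y_i=\hat x_i$ shows that $(T\hat x_m,T\hat x_1,\dots,T\hat x_{m-1})$ is again optimal, since the cyclic sum is unchanged by this rotation. Hence $T\hat x_i=\hat x_{i+1}$. The paper runs this first with $T^m$ and then says ``the same argument'' gives $T\hat x_i=\hat x_{i+1}$; going directly to $T$, as you do, is the cleaner form of the same idea.

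The genuine gap is $p=\infty$. The paper's standing assumption $1\le p\le\infty$ includes it, and you leave it open. Your first suggested remedy cannot work, because for $p=\infty$ the optimal tuple is in general not unique. Only slots carrying the maximal gap are forced to realize it. For example, with $m=3$ and $d(A_1,A_2)$ strictly largest, $x_3$ only has to lie in $A_3\cap B(x_1,D)\cap B(x_2,D)$. Your second remedy, restricting to $p<\infty$, proves a weaker statement. The paper's proof has the same defect: its claim that $d(\hat y_i,\hat y_{i+1})=d(A_i,A_{i+1})$ for all $i$ holds only for $p<\infty$.

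The missing idea is to use your proximal-pair lemma at a single index $i_0$ with $d(A_{i_0},A_{i_0+1})=D:=d_\infty(A_1,\dots,A_m)$.
\begin{itemize}
\item Every optimal tuple $z$ satisfies $D\le\|z_{i_0}-z_{i_0+1}\|\le d_\infty(z)=D$. So $(z_{i_0},z_{i_0+1})$ equals the unique proximal pair $(u,v)$ of $(A_{i_0},A_{i_0+1})$.
\item Your computation shows that $\tau(z)=(Tz_m,Tz_1,\dots,Tz_{m-1})$ maps optimal tuples to optimal tuples. Also $(\tau^k\hat x)_j=T^k\hat x_{j-k}$, with indices mod $m$.
\item Hence $T^k\hat x_{i_0-k}=u$ for every $k\ge 0$. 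With $k=m$ this gives $T^mu=u$.
\item The tuple $\tau^{m-1}\hat x$ has entry $T^{m-1}\hat x_{i_0-(m-1-\ell)}=T^{\ell}u$ in slot $i_0+\ell$. So it is exactly the orbit of $u$, and that orbit is optimal.
\item Therefore $x:=T^{m-i_0+1}u\in A_1$ satisfies $T^mx=x$ and $d_\infty(x,Tx,\dots,T^{m-1}x)=D$.
\item For uniqueness, let $y\in A_1$ be another such point. Its orbit tuple is optimal, so $T^{i_0-1}y=u$, and therefore $y=T^my=T^{m-i_0+1}u=x$.
\end{itemize}
The same argument, with any $i_0$ when $p<\infty$, covers all $1\le p\le\infty$ at once. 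It needs the difference-set hypothesis only at the index $i_0$.
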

\begin{proof}In the case when $d_p(A_1,\cdots,A_m)=0$ the conclusion is obtained from Theorem \ref{BANACH}. So assume that $d_p(A_1,\cdots,A_m)\neq0$. Since $A_i$s are closed and convex subsets, they are weakly closed subsets of $X$. Thus according to Theorem \ref{REFL} there exists $(\hat{x}_1,\hat{x}_2,\cdots,\hat{x}_m)\in A_1\times A_2\times\cdots\times A_m$ such that
$$d_p(\hat{x}_1,\hat{x}_2,\cdots,\hat{x}_m)=d_p(A_1,\cdots,A_m).$$ To show that $(\hat{x}_1,\hat{x}_2,\cdots,\hat{x}_m)$ is unique, assume that there exists another $(\hat{y}_1,\hat{y}_2,\cdots,\hat{y}_m)\in A_1\times A_2\times\cdots\times A_m$ such that
$$d_p(\hat{y}_1,\hat{y}_2,\cdots,\hat{y}_m)=d_p(A_1,\cdots,A_m),$$ and assume that  $\hat{x}_{i_0}\neq\hat{y}_{i_0}$ for some $i_0$. Obviously $d(\hat{y}_{i},\hat{y}_{i+1})=d(A_{i},A_{i+1})$ for all $i\in\{1,\cdots,m\}$. Therefore, as $(A_{i_0}-A_{i_0})\cap(A_{i_0+1}-A_{i_0+1})=\{0\}$, we conclude that $\hat{x}_{i_0}-\hat{y}_{i_0}\neq\hat{x}_{i_0}-\hat{y}_{i_0}$. Since $A_{i_0}$ and $A_{i_0+1}$ are both convex subsets and $X$ is strictly convex, we have that
$$\left\|{{\hat{x}_{i_0}+\hat{y}_{i_0}}\over{2}}-{{\hat{x}_{i_0+1}+\hat{y}_{i_0+1}}\over{2}}\right\|=\left\|{{\hat{x}_{i_0}-\hat{x}_{i_0+1}}\over{2}}+{{\hat{y}_{i_0}-\hat{y}_{i_0+1}}\over{2}}\right\|<d(A_{i_0},A_{i_0+1}),$$ which is a  contradiction. Thus $\hat{x}_{i}=\hat{y}_{i}$ for all $i$ and consequently $(\hat{x}_1,\hat{x}_2,\cdots,\hat{x}_m)=(\hat{y}_1,\hat{y}_2,\cdots,\hat{y}_m)$. As
\begin{eqnarray*}
d_p(A_1,\cdots,A_m)&\leq& d_p(T^m\hat{x}_1,T^m\hat{x}_2,\cdots,T^m\hat{x}_m)\\&\leq& d_p(\hat{x}_1,\hat{x}_2,\cdots,\hat{x}_m)\\&=&d_p(A_1,\cdots,A_m),
\end{eqnarray*}
 we infer that $$d_p(T^m\hat{x}_1,T^m\hat{x}_2,\cdots,T^m\hat{x}_m)=d_p(\hat{x}_1,\hat{x}_2,\cdots,\hat{x}_m)=d_p(A_1,\cdots,A_m).$$ The uniqueness of $(\hat{x}_1,\hat{x}_2,\cdots,\hat{x}_m)$ implies that $T^m\hat{x}_i=\hat{x}_i$ for any $i$. The same argument shows that $T\hat{x}_{i}=\hat{x}_{i+1}$. Thus $\hat{x}_2=T\hat{x}_1$, $\hat{x}_3=T\hat{x}_2=T^2\hat{x}_1$,$\cdots$, $\hat{x}_m=T^{m-1}\hat{x}_1$, and we are done.
\end{proof}
\begin{theorem}Let $A_{i_0}$ be a closed and convex subset of a reflexive and strictly convex Banach space $X$, and let $T$ is weakly continuous on $A_{i_0}$. Then, there exists a unique $x$ in $A_1$, so that $T^mx=x$ and $$d_p(x,Tx,T^2x,\cdots,T^{m-1}x)=d_p(A_1,\cdots,A_{m}).$$
\end{theorem}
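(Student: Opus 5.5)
We combine Theorem \ref{REFL3} for existence with the strict-convexity argument from the uniformly convex section for $T^mx=x$ and uniqueness. Throughout, read ``$x$ in $A_1$'' in the statement as ``$x$ in $A_{i_0}$''; the argument is symmetric under relabeling the chain.

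If $d_p(A_1,\cdots,A_m)=0$, Theorem \ref{BANACH} already gives a unique fixed point in $\cap A_i$, and we are done. So assume $d_p(A_1,\cdots,A_m)\neq0$.

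\textbf{Existence.} Since $A_{i_0}$ is closed and convex, it is weakly closed. The sequence $\{x_{mn+i_0-1}\}$ is bounded by Theorem \ref{MET1}, and $T$ is weakly continuous on $A_{i_0}$. Theorem \ref{REFL3} therefore produces $x\in A_{i_0}$, the weak limit of a subsequence $x_{mn_k+i_0-1}$, with
$$d_p(x,Tx,\cdots,T^{m-1}x)=d_p(A_1,\cdots,A_m).$$

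\textbf{The identity $T^mx=x$.} We follow the previous section's argument. By \eqref{FP8},
$$d_p(A_1,\cdots,A_m)\le d_p(T^mx,Tx,\cdots,T^{m-1}x)\le d_p(T^{m-1}x,x,\cdots,T^{m-2}x)=d_p(A_1,\cdots,A_m),$$
so equality holds throughout. Each term in the $d_p$-sum is at least the corresponding $d(A_i,A_{i+1})$, and the sums agree, so equality holds termwise. In particular
$$\|x-Tx\|=\|T^mx-Tx\|=d(A_{i_0},A_{i_0+1}).$$
Suppose $T^mx\neq x$. By convexity $(x+T^mx)/2\in A_{i_0}$, and strict convexity gives
$$\left\|\tfrac{x+T^mx}{2}-Tx\right\|<d(A_{i_0},A_{i_0+1}),$$
which contradicts the definition of the distance. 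Hence $T^mx=x$.

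\textbf{Uniqueness.} Let $y\in A_{i_0}$ also satisfy $T^my=y$ and $d_p(y,Ty,\cdots,T^{m-1}y)=d_p(A_1,\cdots,A_m)$. We use the cross configuration from the uniqueness proof in the uniformly convex section. Apply \eqref{FP8} to the mixed tuples, using $T^my=y$ and $T^mx=x$. This squeezes the mixed $d_p$ between itself and its $T$-image. Since $\alpha<1$, the $p$-$\alpha$-contraction inequality with $\varphi(t)=\alpha t$ is strict whenever the mixed $d_p$ exceeds $d_p(A_1,\cdots,A_m)$. So the mixed $d_p$ equals $d_p(A_1,\cdots,A_m)$, and termwise this gives
$$\|x-Ty\|=\|y-Tx\|=d(A_{i_0},A_{i_0+1}).$$
If $x\neq y$, then $x-Ty\neq y-Tx$ unless $x-y=Ty-Tx$. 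Strict convexity then gives $\|\frac{x+y}{2}-\frac{Tx+Ty}{2}\|<d(A_{i_0},A_{i_0+1})$. The left side is a distance between a point of $A_{i_0}$ and a point of $A_{i_0+1}$, so this needs $A_{i_0+1}$ convex.

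\textbf{Main obstacle.} The uniqueness step is where I expect trouble. The midpoint $\frac{Tx+Ty}{2}$ lies in $A_{i_0+1}$ only if $A_{i_0+1}$ is convex, and the hypotheses make only $A_{i_0}$ convex. The degenerate case $x-y=Ty-Tx$ must also be ruled out. My first attempt is the pairing used in the previous section: $\frac{x+y}{2}\in A_{i_0}$ against $Tx$, in the form $\|\frac{x-Ty}{2}+\frac{y-Tx}{2}\|$. That expression actually equals $\|\frac{x+y}{2}-\frac{Tx+Ty}{2}\|$, so it runs into the same issue. If neither route closes, I will settle for uniqueness only among points obtained as weak limits of the orbit.
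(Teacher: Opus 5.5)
\textbf{Genuine gap: uniqueness is left unproved.} Your existence step and your derivation of $T^mx=x$ follow the paper's proof: Theorem~\ref{REFL3}, then the strict-convexity midpoint argument against the common point $Tx$. For uniqueness, however, you stop and propose to fall back on uniqueness among weak limits of the orbit, which is not the statement. The obstruction you hit comes from pairing $x-Ty$ with $y-Tx$. The fix is to keep a common centre, exactly as in your $T^mx=x$ step.

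Take the tuples $(x,Tx,\dots,T^{m-1}x)$ and $(y,Ty,\dots,T^{m-1}y)$, indexed from $A_{i_0}$, and let $D$ be their mixed $d_p$. Because $T^mx=x$ and $T^my=y$, the $T$-image of $D$ is a cyclic rearrangement of the same terms $\|T^{j-1}x-T^jy\|$. So \eqref{CC} gives $\varphi(D)\le\varphi(d_p(A_1,\dots,A_m))$, hence $D=d_p(A_1,\dots,A_m)$. Termwise (for $p<\infty$) this gives $\|x-Ty\|=d(A_{i_0},A_{i_0+1})$, and exchanging $x$ and $y$ gives $\|y-Tx\|=d(A_{i_0},A_{i_0+1})$. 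Combine this with $\|x-Tx\|=d(A_{i_0},A_{i_0+1})$ and apply strict convexity with centre $w=Tx$ to the points $x\neq y$:
\[
\left\|\frac{x+y}{2}-Tx\right\|=\left\|\frac{x-Tx}{2}+\frac{y-Tx}{2}\right\|<d(A_{i_0},A_{i_0+1}).
\]
This contradicts $\frac{x+y}{2}\in A_{i_0}$ and $Tx\in A_{i_0+1}$. Only convexity of $A_{i_0}$ is used. There is no degenerate case, since $x-Tx\neq y-Tx$ exactly when $x\neq y$. If $d(A_{i_0},A_{i_0+1})=0$, the two equalities give $x=Tx=y$ directly.

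Your reading of the source is otherwise accurate. The paper says uniqueness is ``exactly the same as'' in Theorem~\ref{REFL5}, but that argument needs every $A_i$ convex and $(A_i-A_i)\cap(A_{i+1}-A_{i+1})=\{0\}$, neither of which is assumed here. The identity $\|\frac{y+x}{2}-Tx\|=\|\frac{x-Ty}{2}+\frac{y-Tx}{2}\|$ in Section~3 is indeed false; the correct decomposition above repairs it.

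One further caveat, which the paper shares: ``the sums agree, so equality holds termwise'' is valid only for $1\le p<\infty$. For $p=\infty$, equal maxima do not force $\|x-Tx\|$, $\|T^mx-Tx\|$ or $\|y-Tx\|$ to equal $d(A_{i_0},A_{i_0+1})$. So both your $T^mx=x$ step and the uniqueness step need either $p<\infty$ or the extra hypothesis $d(A_{i_0},A_{i_0+1})=d_\infty(A_1,\dots,A_m)$ used in Section~3.
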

\begin{proof}In the case when $d_p(A_1,\cdots,A_m)=0$ the conclusion is obtained from Theorem \ref{BANACH}. So assume that $d_p(A_1,\cdots,A_m)\neq0$. Since $A_{i_0}$ is a closed and convex subset, it is a weakly closed subset of $X$. Thus according to Theorem \ref{REFL3} there exists $x\in A_{i_0}$ so that $d_p(x,Tx,\cdots,T^{m-1}x)=d_p(A_1,\cdots,A_m)$. If $T^mx\neq x$, then $T^mx-Tx\neq x-Tx$. It follows from the convexity of $A_{i_0}$ and the strictly convexity of $X$ that $$0<\left\|{{{T^mx}+x}\over{2}}-Tx\right\|=\left\|{{{T^mx}-Tx}\over{2}}+{{x-Tx}\over{2}}\right\|<d(A_{i_0},A_{i_0+1}),$$ which is impossible. Thus $T^mx=x$. The proof of uniqueness of $x$ is exactly the same as that of Theorem \ref{REFL5}.
\end{proof}
\bibliographystyle{amsplain}

\begin{thebibliography}{10}
\bibitem{ThSh}
 M. A. Al-Thagafi and Naseer Shahzad
 \textit{ Convergence and existence results for best proximity points},
Nonlinear Analysis;TMA,\textbf{70}(2009)3665--3671.


\bibitem{ElVe}
A.A. Eldred and P. Veeramani, \textit{ Existence and convergence of best proximity points}, J. Math. Anal. Appl., \textbf{323} (2006), 1001--1006.

\bibitem{Ki}
W. A. Kirk \textit{ Contraction mappings and extensions, in: W.A. Kirk, B. Sims (Eds)}, Handbook of Metric Fixed Point Theory, Kluwer Academic Publishers, Dordrecht,2001,pp.1--34.

\bibitem{KiSrVe}
W. A. Kirk, P. S. Srinivasan and P. Veeramani, \textit{ Fixed points for mappings satisfying cyclical contractive conditions}, Fixed Point Theory, \textbf{4} (2003), 79--89.

\end{thebibliography}

\end{document}